\def\arxiv#1{\href{http://arxiv.org/abs/#1}{\texttt{arXiv:#1}}}
\theoremstyle{plain}
\newtheorem{theorem}{Theorem}[section]
\newtheorem{proposition}[theorem]{Proposition}
\newtheorem{lemma}[theorem]{Lemma}
\theoremstyle{definition}
\newtheorem{remark}[theorem]{Remark}
\newtheorem{remarks}[theorem]{Remarks}
\theoremstyle{remark}
\numberwithin{equation}{section}
\def\C{\mathbb C}
\let\phi\varphi
\def\N{\mathbb N}
\def\Z{\mathbb Z}
\def\Q{\mathbb Q}
\def\CP{\mathbb{CP}}
\def\PP{\mathbb P}
\def\Cstar{\C^{\times}}
\def\pcont#1#2{{}_{#2}#1}
\DeclareMathOperator{\Hom}{Hom}
\def\cf{\emph{cf.}}
\def\cP{\mathcal P}
\def\cQ{\mathcal Q}
\def\larrow#1{\to}
\begin{document}

\title{The classification of weighted projective spaces}

\author[A.~Bahri]{Anthony Bahri}
\address{Department of Mathematics, Rider
  University, Lawrenceville, NJ~08648, U.S.A.}
\email{bahri@rider.edu}
\author[M.~Franz]{Matthias Franz}
\address{Department of Mathematics, University of Western Ontario, London, Ont.\ N6A\;5B7, Canada}
\email{mfranz@uwo.ca}
\author[D.~Notbohm]{Dietrich Notbohm}
\address{Department of Mathematics, Vrije Universiteit Amsterdam, De Boelelaan 1081a, 1081~HV Amsterdam, The Netherlands}
\email{notbohm@few.vu.nl}
\author[N.~Ray]{Nigel Ray}
\address{School of Mathematics, University of Manchester, Oxford Road, Manchester M13~9PL, United Kingdom}
\email{nigel.ray@manchester.ac.uk}

\pdfstringdefDisableCommands{\def\and{, }}
\hypersetup{pdfauthor=\authors}

\subjclass[2010]{Primary 55P15; secondary 14M25, 55P60, 57R18}

\begin{abstract}
  We obtain two classifications of weighted projective spa\-ces; up
  to homeomorphism and up to homotopy equivalence. We show that the 
  former coincides with Al~Amrani's classification up to isomorphism of 
  algebraic varieties, and deduce the latter by proving that the Mislin genus 
  of any weighted projective space is rigid.
\end{abstract}

\maketitle

\section{Introduction}

Weighted projective spaces are the simplest projective toric varieties
that exhibit orbifold singularities.  They have been extensively
investigated by algebraic geometers, but have attracted only fleeting
attention from algebraic topologists
since Kawasaki's pioneering work~\cite{Kawasaki:1973}, in which he
computed their integral cohomology rings. Subsequently, their
$K$-theory was determined by Al~Amrani~\cite{AlAmrani:1994}, and the
study of their $KO$-theory was initiated by
Nishimura--Yosimura~\cite{NishimuraYosimura:1997}.

In toric geometry, weighted projective spaces are classified by their
fans. Here, we give two classifications that are fundamental to
algebraic topology: up to homeo\-morphism, and up to homotopy
equivalence. We obtain the second as a consequence of the
fact that the Mislin genus of a weighted projective space is rigid.
Our results are stated below, following summaries of the definitions and
notation.

A \emph{weight vector}~$\chi=(\chi_{0},\dots,\chi_{n})$ is a finite
sequence of positive integers.  It gives rise to a weighted action
of~$S^{1}$ on~$S^{2n+1}\subset\C^{n+1}$,
\begin{equation}
  \label{eq:weighted-action}
  g\cdot z = \bigl(g^{\chi_{0}}z_{0},\dots,g^{\chi_{n}}z_{n}\bigr)
  \qquad \hbox{for~$g\in S^{1}, z\in S^{2n+1}$.}
\end{equation}
The quotient~$S^{2n+1}/S^{1}\langle \chi\rangle$ is the weighted
projective space~$\PP(\chi)$.  Alternatively, $\PP(\chi)$ may be
defined as the quotient of~$\C^{n+1}\setminus\{0\}$ by the same
weighted action of~$\Cstar$; this exhibits $\PP(\chi)$ as a complex
projective variety.

Scaling the weight vector~$\chi$ leads to isomorphic weighted projective
spaces~$\PP(\chi)$ and~$\PP(m\chi)$, for any integer~$m\geq 1$.
Moreover,
if all weights except, say, $\chi_{0}$ are divisible by some prime~$p$,
then the map
\begin{equation}
  \label{eq:normiso}
  \PP(\chi) \to \PP(\chi_{0},\chi_{1}/p,\dots,\chi_{n}/p),
  \quad
  \bigl[z_{0}:\dots:z_{n}\bigr] \mapsto \bigl[z_{0}^{p}:z_{1}:\dots:z_{n}\bigr]
\end{equation}
is an isomorphism as well, \cf~\cite[\S 5.7]{Fletcher:2000}.  This
leads to the notion of \emph{normalized weights}: a weight
vector~$\chi$ is normalized if for any prime~$p$ at least two weights
in~$\chi$ are not divisible by~$p$.  Any weight vector can be
transformed to a unique normalized vector by repeated application of
scaling and \eqref{eq:normiso}. Consequently, two weighted projective 
spaces are isomorphic as algebraic varieties and homeomorphic 
as topological spaces if they have the same normalized weights, 
up to order. We prove that the converse is also true.
In particular, we recover Al~Amrani's classification up to 
isomorphism of algebraic varieties \cite[\S 8.1]{AlAmrani:1989}.

\begin{theorem}
  \label{thm:classification-homeomorphism}
  The following are equivalent for any weight vectors $\chi$~and~$\chi'$:
  \begin{enumerate}
  \item \label{thm:a1}
    The normalizations
    of $\chi$~and~$\chi'$ are the same, up to order.
  \item \label{thm:a2}
    $\PP(\chi)$~and~$\PP(\chi')$ are isomorphic as algebraic varieties.
  \item \label{thm:a3}
    $\PP(\chi)$~and~$\PP(\chi')$ are homeomorphic.
  \end{enumerate}
\end{theorem}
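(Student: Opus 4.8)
The implications \eqref{thm:a1}$\Rightarrow$\eqref{thm:a2} and \eqref{thm:a2}$\Rightarrow$\eqref{thm:a3} are already in hand: the first is exactly the content of the discussion preceding the theorem (scaling and the map~\eqref{eq:normiso} are isomorphisms of varieties, and every weight vector has a normalization), and the second holds because an isomorphism of varieties is a homeomorphism. So the whole point is \eqref{thm:a3}$\Rightarrow$\eqref{thm:a1}. Applying \eqref{thm:a1}$\Rightarrow$\eqref{thm:a3} to $\chi$ and its normalization, we may assume $\chi$ and $\chi'$ are already normalized, and we must show that a homeomorphism $\PP(\chi)\to\PP(\chi')$ forces $\chi=\chi'$ up to order. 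Comparing dimensions (as $\dim_{\R}\PP(\chi)=2n$) shows at once that $\chi$ and $\chi'$ have the same length $n+1$, so the task is to recover the multiset $\{\chi_{0},\dots,\chi_{n}\}$ from the homeomorphism type of $\PP(\chi)$.

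The plan is to read off, for each integer $d\ge1$, the cardinality $\#\{i:d\mid\chi_{i}\}$, since by M\"obius inversion over the divisibility poset these determine $\#\{i:\chi_{i}=e\}$ for every $e$, hence the multiset of weights. For any $d$ the subspace
\begin{equation}
  \Sigma_{d}(\chi):=\bigl\{[z_{0}:\dots:z_{n}]\in\PP(\chi):z_{i}=0\text{ whenever }d\nmid\chi_{i}\bigr\}
\end{equation}
is the sub-weighted projective space on the weights $\chi_{i}$ with $d\mid\chi_{i}$, of complex dimension $\#\{i:d\mid\chi_{i}\}-1$; and if $[z]$ has support $S=\{i:z_{i}\ne0\}$, its isotropy group for the action~\eqref{eq:weighted-action} is cyclic of order $e_{[z]}:=\gcd(\chi_{i}:i\in S)$, with $[z]\in\Sigma_{d}(\chi)$ precisely when $d\mid e_{[z]}$. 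Hence it suffices to prove that the function $[z]\mapsto e_{[z]}$ is a homeomorphism invariant of $\PP(\chi)$: a homeomorphism $\PP(\chi)\to\PP(\chi')$ would then carry $\Sigma_{d}(\chi)$ onto $\Sigma_{d}(\chi')$ for every $d$, forcing $\#\{i:d\mid\chi_{i}\}=\#\{i:d\mid\chi'_{i}\}$, and we would be done.

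To see that $e_{[z]}$ is a homeomorphism invariant, compute the local homology $H_{*}\bigl(\PP(\chi),\PP(\chi)\setminus\{[z]\}\bigr)$, which homeomorphisms preserve. By the slice theorem for the $S^{1}$-action, a neighbourhood of a point $[z]$ with support $S$, $|S|=k$, is homeomorphic to $\R^{2(k-1)}\times\bigl(\C^{\,n+1-k}/(\Z/e_{[z]})\bigr)$, where $\Z/e_{[z]}$ acts linearly with weights $(\chi_{j}\bmod e_{[z]})_{j\notin S}$; so the local homology at $[z]$ is, up to a degree shift, the reduced homology of the fake lens space $L_{[z]}:=S^{\,2(n+1-k)-1}/(\Z/e_{[z]})$ for that action. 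The key claim is that $\tilde H_{*}(L_{[z]})$ determines $e_{[z]}$, and here normalization is essential: for every prime $p\mid e_{[z]}$ at least two of the weights $\chi_{j}$ with $j\notin S$ are coprime to $p$ (at least two of all the $\chi_{i}$ are, and those with $i\in S$ are divisible by $p$), so the $p$-primary subgroup of $\Z/e_{[z]}$ acts freely on the $3$-sphere in the $\C^{2}$ spanned by two such coordinates; a direct computation with these fake lens spaces then shows that, for each $p$, the $p$-torsion in $\tilde H_{*}(L_{[z]})$ has exponent exactly the $p$-part of $e_{[z]}$, so that $\tilde H_{*}(L_{[z]})$ indeed recovers $e_{[z]}$.

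I expect the last step to be the only real obstacle: while the reduction to normalized weights, the dimension count, the passage through $\Sigma_{d}$, and the M\"obius inversion are formal, verifying the local topological model and, above all, the homology computation for the $L_{[z]}$ requires care, because these spaces need not be manifolds — when some weight $\chi_{j}\bmod e_{[z]}$ shares a factor with $e_{[z]}$, $L_{[z]}$ is a join of a sphere with a lower-dimensional fake lens space — and it is precisely the normalization hypothesis, used as above to produce a free action of each $p$-primary subgroup on a sub-$3$-sphere, that makes the torsion read off $e_{[z]}$ cleanly.
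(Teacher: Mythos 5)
Your proposal is correct and follows essentially the same route as the paper: your subspaces $\Sigma_{d}(\chi)$ are the paper's $X(d)$, your local model at $[z]$ is Lemma~\ref{wps-to-lens-general}, and the claim that local homology detects the isotropy order $e_{[z]}$ is exactly Lemma~\ref{local-homology-wps}, which the paper proves via Kawasaki's computation of the cohomology of generalized lens spaces (Theorem~\ref{thm:kawasaki-lens}) combined with precisely the normalization argument you indicate (two weights prime to $p$ surviving into $\chi_{I}$). The one step you leave as an assertion --- that the $p$-torsion of $\tilde H_{*}(L_{[z]})$ has exponent equal to the $p$-part of $e_{[z]}$ --- is where the paper reads off the top even-degree group $H^{2m}(L(q;\chi_{I}))\cong\Z_{q'}$ from Theorem~\ref{thm:kawasaki-lens} and checks $q'=q$, so it is a citation plus a short manipulation rather than an obstacle.
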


For any prime $p$, the \emph{$p$-content}~$\pcont{\chi}{p}$ of~$\chi$
is the vector made up of the highest powers of~$p$ dividing the
individual weights.  For example, $\pcont{(1,2,3,4)}{2}=(1,2,1,4)$.
Let $\chi$~and~$\chi'$ be two normalized weight vectors.  It follows
from Kawasaki's result that the cohomology rings
$H^{*}(\PP(\chi);\Z)$~and~$H^{*}(\PP(\chi');\Z)$ are isomorphic if and
only if, for all primes~$p$, the $p$-contents
$\pcont{\chi}{p}$~and~$\pcont{\chi'}{p}$ are the same up to order.
The same phenomenon can be observed in $K$-theory and $KO$-theory. In
fact, no cohomology theory can tell such spaces apart:

\begin{theorem}
  \label{thm:classification-homotopy}
  Two weighted projective spaces are homotopy equivalent
  if and only if for all primes~$p$, the $p$-contents of
  their normalized weights are the same, up to order.
\end{theorem}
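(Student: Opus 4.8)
\section*{Proof proposal}

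The plan is to handle the two implications separately. Throughout we may assume, by the reductions preceding Theorem~\ref{thm:classification-homeomorphism}, that $\chi$~and~$\chi'$ are normalized. The forward implication is then formal: a homotopy equivalence $\PP(\chi)\simeq\PP(\chi')$ induces a graded ring isomorphism $H^{*}(\PP(\chi);\Z)\cong H^{*}(\PP(\chi');\Z)$, and the consequence of Kawasaki's computation recalled above forces $\pcont{\chi}{p}$ and $\pcont{\chi'}{p}$ to agree up to order for every prime~$p$. So the content is the converse: if $\pcont{\chi}{p}=\pcont{\chi'}{p}$ up to order for all~$p$, then $\PP(\chi)\simeq\PP(\chi')$. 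The strategy is to show that $\PP(\chi)$~and~$\PP(\chi')$ belong to the same Mislin genus, and then that this genus is rigid.

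For the genus we compare localizations. Equality of the $p$-content vectors forces $\chi$~and~$\chi'$ to have a common length $n+1$, and since every weighted projective space with $n+1$ weights is rationally equivalent to~$\CP^{n}$, the rationalizations agree. Fix a prime~$p$; I claim $\PP(\chi)_{(p)}$ depends only on the multiset~$\pcont{\chi}{p}$. Given a prime $q\neq p$, write $\chi_{i}=q^{c_{i}}r_{i}$ with $q\nmid r_{i}$. Then $[z_{0}:\dots:z_{n}]\mapsto[z_{0}^{q^{c_{0}}}:\dots:z_{n}^{q^{c_{n}}}]$ is a well-defined map $\PP(r_{0},\dots,r_{n})\to\PP(\chi)$, covered by the $S^{1}$-equivariant self-map $z\mapsto w/\|w\|$ of $S^{2n+1}$ with $w=(z_{0}^{q^{c_{0}}},\dots,z_{n}^{q^{c_{n}}})$, whose degree $q^{c_{0}+\dots+c_{n}}$ is prime to~$p$. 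Using the cellular decomposition $\PP(\psi_{0},\dots,\psi_{n})=\PP(\psi_{0},\dots,\psi_{n-1})\cup e^{2n}$ and induction on~$n$, one sees that this map is an isomorphism on $\Z_{(p)}$-homology in every degree, hence, both spaces being simply connected, a $p$-local homotopy equivalence. Stripping off the $q$-part of the weights for each prime $q\neq p$ in turn reduces $\chi$ to~$\pcont{\chi}{p}$, so $\PP(\chi)_{(p)}\simeq\PP(\pcont{\chi}{p})_{(p)}\simeq\PP(\chi')_{(p)}$, and $\PP(\chi')$ lies in the genus of~$\PP(\chi)$.

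It remains to prove that $\mathrm{Genus}(\PP(\chi))$ is trivial. I would use the standard double-coset description of the genus of a simply connected finite complex~$X$: up to a formal identification, $\mathrm{Genus}(X)$ is a quotient of $\mathcal E(X_{\Q})\big\backslash\,\mathcal E\bigl((\textstyle\prod_{p}\hat X_{p})_{\Q}\bigr)\,\big/\prod_{p}\mathcal E(\hat X_{p})$ (Wilkerson; Hilton--Mislin--Roitberg). For $X=\PP(\chi)$ the action on $H^{2}$ identifies $\mathcal E(X_{\Q})$ with~$\Q^{\times}$ and the middle term with the finite-idele group~$\mathbb A_{f}^{\times}$, while the power maps $\delta_{d}\colon[z_{0}:\dots:z_{n}]\mapsto[z_{0}^{d}:\dots:z_{n}^{d}]$ give, for $d\geq1$ prime to~$p$, self-equivalences of $\PP(\chi)^{\wedge}_{p}$ inducing multiplication by~$d$ on $H^{2}(-;\Z_{p})$. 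Such $d$ are dense in~$\Z_{p}^{\times}$, and since $[\PP(\chi),\PP(\chi)^{\wedge}_{p}]$ is compact (it is assembled through finitely many stages from the finitely generated $\Z_{p}$-modules $H^{k}(\PP(\chi);\pi_{j}(\PP(\chi)^{\wedge}_{p}))$), a limiting argument realizes the remaining units, so $\mathcal E(\PP(\chi)^{\wedge}_{p})\twoheadrightarrow\Z_{p}^{\times}$ for every~$p$. The double coset then collapses to $\Q^{\times}\backslash\mathbb A_{f}^{\times}/\widehat{\Z}^{\times}$, the ideal class group of~$\Q$, which is trivial. Hence $\PP(\chi)\simeq\PP(\chi')$.

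I expect the main obstacle to be the surjectivity $\mathcal E(\PP(\chi)^{\wedge}_{p})\twoheadrightarrow\Z_{p}^{\times}$ — concretely, showing that scaling the canonical degree-$2k$ generator of $H^{*}(\PP(\chi);\Z_{p})$ by~$\lambda^{k}$ is realized by an honest self-map, which requires an obstruction-theoretic induction over the even-dimensional cells of~$\PP(\chi)$ — together with pinning down the double-coset formula and the identifications of the self-equivalence monoids entering it. Once those are settled, the rational and $p$-local comparisons and the closing class-number argument are routine.
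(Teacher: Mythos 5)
Your forward implication and your genus--membership argument are essentially the paper's: the power map that strips the prime-to-$p$ part of the weights is exactly the map $g\colon\PP(\pcont{\chi}{p})\to\PP(\chi)$ used there, except that the paper deduces that $H^*(g;\Z_p)$ is an isomorphism in one line from Kawasaki's formula for the~$l_i$ (Theorem~\ref{thm:kawasaki-wps}) rather than from a cell structure. Be careful with the latter: the decomposition $\PP(\psi_0,\dots,\psi_n)=\PP(\psi_0,\dots,\psi_{n-1})\cup e^{2n}$ is false for general weights --- by Lemma~\ref{wps-to-lens-general} the top stratum is the open cone on the lens space $L(\psi_n;\psi_0,\dots,\psi_{n-1})$, which is a cell only when that weight is~$1$, and the paper reserves this decomposition for divisor chains (Remark~\ref{rem:cell-decomposition}). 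This slip is harmless only because the statement you want follows directly from Kawasaki anyway.

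Where you genuinely diverge is the rigidity of the genus, and here there are real gaps. The paper stays entirely within localization: it adjoins one prime at a time, glues along a rational self-equivalence $h$ of $\PP(\chi)_0$ via a homotopy pullback (Lemma~\ref{thm:homotopy-pullback}, Proposition~\ref{thm:genus-rigid}), and factors $h\simeq d_0e_0^{-1}$ using only that every unit of $\Z_{\cP}$ is $\pm a/b$ with $a,b$ integers prime to~$\cP$, so the power maps $m_a$ and their homotopy inverses already realize every degree needed (Proposition~\ref{thm:degree}). Your adelic double-coset route is the classical alternative (Wilkerson; McGibbon for $\CP^n$), but two steps are missing. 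First, the surjectivity $\mathcal{E}(\PP(\chi)^{\wedge}_{p})\to\Z_p^{\times}$, which you flag: a general $p$-adic unit is not a ratio of integers prime to~$p$, so power maps give only a countable dense subgroup, and the subgroup of $\widehat{\Z}^{\times}$ generated coordinatewise by such rationals together with the diagonal $\Q^{\times}$ is a \emph{proper} subgroup of $\mathbb{A}_f^{\times}$ --- without genuine surjectivity the double coset does not collapse. Closing up the image requires Sullivan's profiniteness of $[X,\hat{Y}_p]$ for $X$ finite and $\hat{Y}_p$ $p$-complete of finite type, which must be invoked and verified, not just gestured at. Second, identifying the middle term with $\mathbb{A}_f^{\times}$ requires that self-equivalences of the rationalized adelic space be detected by their action on~$H^2$; the paper's injectivity argument (Proposition~\ref{thm:degree}\,\eqref{thm:degree-CPn}) uses that the source is a $2n$-dimensional complex mapping to a skeleton of $K(\Z_{\cP},2)$, and $\bigl(\prod_p\hat{X}_p\bigr)_0$ is no such complex (nor is $\CP^n_0$ an Eilenberg--Mac\,Lane space), so this identification needs its own obstruction-theoretic argument. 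The paper's localization scheme is designed precisely to avoid both issues.
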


The torus~$T = (S^{1})^{n+1}/S^{1}\langle\chi\rangle \cong (S^{1})^{n}$
and its complexification~$T_{\C}$ act on~$\PP(\chi)$ in a canonical
way, and the resulting equivariant homotopy type is a
finer invariant. As shown in~\cite[Thm.~5.1]{BahriFranzRay:2009},
the equivariant cohomology ring~$H_{T}^{*}(\PP(\chi);\Z)$
determines the normalized weights up to order.

Let $\pcont{\chi^{*}}{p}$ be the vector obtained from $\pcont{\chi}{p}$ 
by ordering its coordinates as a 
non-decreasing sequence, and let $\chi^{*}$ denote the product of 
the~$\pcont{\chi^{*}}{p}$, taken coordinatewise. For
example, $(1,2,3,4)^*=(1,1,2,12)$.
By Theorem~\ref{thm:classification-homotopy},
$\PP(\chi)$ is homotopy equivalent to~$\PP(\chi^{*})$.
The weights in~$\chi^{*}$ form a
\emph{divisor chain}, in the sense that each weight divides the next.
As a consequence, the space~$\PP(\chi^{*})$ is particularly easy to 
work with because the differences
\begin{equation}
  \label{eq:cell-decomposition}
  * = \PP(\chi^{*}_{n}), \;\;
  \PP(\chi^{*}_{n-1},\chi^{*}_{n}) \setminus \PP(\chi^{*}_{n}),\;\;
  \dots,\;\;
  \PP(\chi^{*}_{0},\dots,\chi^{*}_{n}) \setminus
  \PP(\chi^{*}_{1},\dots,\chi^{*}_{n})
\end{equation}
form a cell decomposition of~$\PP(\chi^{*})$
(see Remark~\ref{rem:cell-decomposition} below), and
\begin{equation}
  * = \PP(\chi^{*}_{0}) \subset \PP(\chi^{*}_{0},\chi^{*}_{1})
  \subset \cdots \subset
  \PP(\chi^{*}_{0},\dots,\chi^{*}_{n-1}) \subset
  \PP(\chi^{*}_{0},\dots,\chi^{*}_{n})
\end{equation}
displays $\PP(\chi^{*})$ as an iterated Thom space 
\cite[Cor.~3.8]{BahriFranzRay:2011}.

The \emph{Mislin genus} of a weighted projective space~$\PP(\chi)$ is
the set of all homotopy classes of simply connected CW~complexes~$Y$
of finite type such that for all primes~$p$ the $p$-localizations
of~$Y$~and~$\PP(\chi)$ are homotopy equivalent.  The Mislin genus of a
space is \emph{rigid} if it contains only the class of the space itself.

\begin{theorem}\label{thm:genus-wps}
  The Mislin genus of any weighted projective space is rigid.
\end{theorem}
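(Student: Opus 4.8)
The plan is to reduce the statement to a question about self-equivalences of the $p$-completions, and then to answer that question by exhibiting $p$-adic unstable Adams operations coming from the algebraic power maps.

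Since the Mislin genus depends only on the homotopy type, Theorem~\ref{thm:classification-homotopy} lets us replace $\PP(\chi)$ by $X:=\PP(\chi^{*})$, whose weights form a divisor chain. Then $X$ is a simply connected finite CW~complex; by Kawasaki its integral cohomology is torsion free, concentrated in even degrees, of rank one in each degree $0,2,\dots,2n$, and rationally $X\simeq\CP^{n}$. The key structural observation is that a self-equivalence of $X$ --- or of $X_{\Q}$, or of any $p$-completion $X^{\wedge}_{p}$ --- is detected by its action on $H^{2}$: writing $x$ for a generator of $H^{2}$, Kawasaki's computation shows that $x^{j}$ spans a finite-index subgroup of the torsion-free group $H^{2j}$ for each $j\le n$, so a ring automorphism fixing $H^{2}$ fixes all of $H^{*}$, while the action on $H^{2}$ itself is by a unit. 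Combined with the fact that $X_{\Q}\simeq\CP^{n}_{\Q}$ is formal, hence carries rational Adams operations realizing all of $\Q^{\times}$ on $H^{2}$, and that complex conjugation $[z_{0}:\dots:z_{n}]\mapsto[\bar z_{0}:\dots:\bar z_{n}]$ is a self-equivalence of $X$ acting by $-1$ on $H^{2}$, the analysis of the genus through the localization--completion (arithmetic) square shows that it is enough to prove: for every prime~$p$, the homomorphism $\mathcal E(X^{\wedge}_{p})\to\mathrm{Aut}\bigl(H^{2}(X^{\wedge}_{p};\mathbb Z_{p})\bigr)=\mathbb Z_{p}^{\times}$ is surjective.

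To produce self-maps of $X$, observe that for every integer $u\ge1$ the power map
\[
  \psi^{u}\colon X\to X,\qquad [z_{0}:\dots:z_{n}]\mapsto[z_{0}^{u}:\dots:z_{n}^{u}],
\]
is well defined --- it respects the weighted $\Cstar$-action, since $(\lambda^{\chi_{i}}z_{i})^{u}=(\lambda^{u})^{\chi_{i}}z_{i}^{u}$ --- it induces multiplication by~$u$ on $H^{2}$, and $\psi^{u}\psi^{v}=\psi^{uv}$. When $p\nmid u$ the map $\psi^{u}$ is a self-equivalence of $X^{\wedge}_{p}$, so the image of $\mathcal E(X^{\wedge}_{p})$ in $\mathbb Z_{p}^{\times}$ already contains the dense submonoid of positive integers prime to~$p$. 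For a prime~$p$ dividing no weight of $\chi^{*}$ one has $X^{\wedge}_{p}\simeq(\CP^{n})^{\wedge}_{p}$ (a local form of Theorem~\ref{thm:classification-homotopy}), and the classical $p$-complete unstable Adams operations on $\CP^{n}$ give surjectivity directly. For the finitely many primes that do divide a weight, one upgrades the integral power maps to genuine $p$-adic Adams operations: because $X$ is a finite complex, $\mathrm{map}(X,X^{\wedge}_{p})$ is again $p$-complete, so a sequence of positive integers $u_{k}$ converging $p$-adically to a unit $u\in\mathbb Z_{p}^{\times}$, with $u_{k}\equiv u_{k+1}$ modulo ever higher powers of~$p$, yields a Cauchy sequence $\psi^{u_{k}}$ in $[X,X^{\wedge}_{p}]$ whose limit $\psi^{u}$ induces multiplication by~$u$ on $H^{2}$ and hence is a self-equivalence of $X^{\wedge}_{p}$. (Alternatively one constructs $\psi^{u}$ by obstruction theory over the even skeleta of $X^{\wedge}_{p}\simeq\PP(\pcont{\chi^{*}}{p})^{\wedge}_{p}$, using Kawasaki's ring structure to identify the finite obstruction groups and the relations $\psi^{u}\psi^{v}=\psi^{uv}$ to kill the obstructions.) Either way $\mathcal E(X^{\wedge}_{p})\to\mathbb Z_{p}^{\times}$ is onto for every~$p$, and the theorem follows.

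The step I expect to be the main obstacle is the last one: passing from the integral power maps, which realize only a dense submonoid of $\mathbb Z_{p}^{\times}$, to $p$-adic Adams operations realizing \emph{all} of $\mathbb Z_{p}^{\times}$ by self-equivalences of $X^{\wedge}_{p}$ at the primes dividing a weight. Once that is in hand, everything else is a formal manipulation of the arithmetic square together with Kawasaki's cohomology computation.
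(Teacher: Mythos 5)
Your strategy is viable in outline---it is essentially the Sullivan--Wilkerson--McGibbon approach to the genus via the arithmetic square---but it is genuinely different from the paper's argument, it leaves its two hardest steps as assertions, and it opens with a circularity. The circularity: you invoke Theorem~\ref{thm:classification-homotopy} to replace $\PP(\chi)$ by $\PP(\chi^{*})$, but in the paper that theorem is \emph{deduced from} Theorem~\ref{thm:genus-wps}. Fortunately the reduction is harmless to omit, since nothing later in your argument uses the divisor-chain property. The substantive difference is that the paper never needs to realize all $p$-adic units by self-equivalences of a $p$-completion. Its fracture argument (Lemma~\ref{thm:homotopy-pullback} and Proposition~\ref{thm:genus-rigid}) is run with localizations at complementary \emph{sets} of primes $\cP$ and $\cQ$, so the only degrees that must be realized are units of $\Z_{\cP}$, i.e.\ ratios of ordinary integers prime to $\cP$. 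These are supplied by the algebraic power maps and complex conjugation alone (Proposition~\ref{thm:degree}): given a rational self-equivalence $h$ of degree $\pm a/b$, one splits $a=a'a''$ and $b=b'b''$ according to which prime factors lie in $\cP$ and factors $h\simeq d_0e_0^{-1}$. No completions, no limits of power maps, no non-integral Adams operations.

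By contrast, your reduction to surjectivity of $\mathcal{E}(X^{\wedge}_{p})\to(\text{$p$-adic units})$ requires two pieces of real work that you only gesture at. First, the double-coset description of the genus over the arithmetic square, including the facts that self-equivalences of the rationalized product of completions are detected by their degree in the finite ideles and that $\mathbb{A}_f^{\times}=\Q^{\times}\cdot\hat{\Z}^{\times}$; none of this is proved, and the detection statement is exactly the kind of claim the paper must argue carefully even in the easier case $\cP=\emptyset$ (Proposition~\ref{thm:degree}\,\eqref{thm:degree-CPn}). Second, at each prime dividing a weight, the passage from the dense set of integral degrees to all $p$-adic units: your ``Cauchy sequence'' argument can be made to work (for $X$ finite and $Y$ $p$-complete of finite type, $[X,Y]$ is a profinite set, the degree map is continuous, and the set of realized degrees is therefore closed and contains a dense subgroup of the units), but as written it is an assertion, not a proof---$[X,X^{\wedge}_{p}]$ is not a group, and the compactness statement needs to be established. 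You correctly identify this as the main obstacle; the point of the paper's set-of-primes formulation is that the obstacle can be bypassed entirely, since a unit of $\Z_{\cP}$, unlike a general $p$-adic unit, is always a quotient of integers and hence realizable by honest algebraic self-maps.
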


For~$\CP^{n}$, this has been established by
McGibbon~\cite[Thm.~4.2\,(ii)]{McGibbon:1982}.

\medbreak

In Section~\ref{sec:kawasaki} we review Kawasaki's results on which
our work is based. Theorem~\ref{thm:classification-homeomorphism} is
proved in Section~\ref{sec:homeo}, and Theorems 
\ref{thm:classification-homotopy} and \ref{thm:genus-wps} in
Section~\ref{sec:homotopy}; necessary conditions for the 
rigidity of the Mislin genus are established in Section \ref{sec:mislin}.

\section{Kawasaki's results}
\label{sec:kawasaki}

From now on, $\chi=(\chi_{0},\dots,\chi_{n})$ always denotes a
normalized a weight vector, and cohomology is taken with integer
coefficients unless otherwise stated. In order to make Kawasaki's
description of $H^{*}(\PP(\chi))$ explicit, it is convenient to recall 
his notation $(r_{0}(\chi;p),\dots,r_{n}(\chi;p))$ for the 
non-decreasing weight vector $\pcont{\chi^{*}}{p}$; given any 
$0\le i\le n$, we then set
\begin{equation}
  l_{i} = l_{i}(\chi) =
  \prod_{\text{$p$ prime}}\! r_{n-i+1}(\chi;p)\cdots r_{n}(\chi;p).
\end{equation}
We also consider the map
\begin{equation}
  \label{eq:definition-phi}
  \phi=\phi_{\chi}\colon \CP^{n}\to\PP(\chi),
  \quad
  [z_{0}:\dots:z_{n}] \mapsto [z_{0}^{\chi_{0}}:\dots:z_{n}^{\chi_{n}}].
\end{equation}

\begin{theorem}[{\cite[Thm.~1]{Kawasaki:1973}}]
  \label{thm:kawasaki-wps}
  Additively, $H^{*}(\PP(\chi))\cong H^{*}(\CP^{n})$. Furthermore, 
  there exist generators $\xi_{i}\in H^{2i}(\PP(\chi))$ and
  $\eta\in H^{2}(\CP^{n})$ such that $\phi^{*}(\xi_{i})=l_{i}\eta^{i}$
  for $0\le i\le n$; the multiplicative structure is specified by
  \begin{equation*}
    \xi_{i}\xi_{j} = \frac{l_{i}l_{j}}{l_{i+j}}\,\xi_{i+j}
  \end{equation*}
in $H^{2(i+j)}(\PP(\chi))$, for $0\le i+j\le n$.
\end{theorem}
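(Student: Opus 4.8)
The plan is to separate the statement into three parts and treat them in order: (i)~$H^{*}(\PP(\chi);\Z)$ is free abelian with one generator in each even degree $0,2,\dots,2n$ and vanishes in all other degrees; (ii)~$\phi^{*}$ is injective, so that after fixing generators $\xi_{i}$ of $H^{2i}(\PP(\chi))$ and $\eta$ of $H^{2}(\CP^{n})$ and adjusting signs there are well-defined positive integers $l_{i}$ with $\phi^{*}(\xi_{i})=l_{i}\eta^{i}$, and these coincide with the displayed prime products; and (iii)~the multiplicative structure then follows formally. Part (iii) is short: in $H^{2(i+j)}(\CP^{n})$ one has $\phi^{*}(\xi_{i}\xi_{j})=l_{i}l_{j}\,\eta^{i+j}$ and $\phi^{*}(\xi_{i+j})=l_{i+j}\,\eta^{i+j}$, and $l_{i+j}\mid l_{i}l_{j}$ --- visible prime by prime, since for each $p$ the $i+j$ largest of the $p$-parts of $\chi_{0},\dots,\chi_{n}$ are dominated termwise by the $i$ largest together with the $j$ largest --- so $(l_{i}l_{j}/l_{i+j})\,\xi_{i+j}$ and $\xi_{i}\xi_{j}$ have the same image under the injection $\phi^{*}$ and hence are equal.

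For part (i) I would argue by induction along the flag of subvarieties $V_{k}=\PP(\chi_{0},\dots,\chi_{k})=\{[z]\in\PP(\chi):z_{k+1}=\dots=z_{n}=0\}$, from $V_{0}=\{\mathrm{pt}\}$ to $V_{n}=\PP(\chi)$. Scaling the $\Cstar$-action identifies the open stratum $V_{k}\setminus V_{k-1}$ with $\C^{k}/\mu_{\chi_{k}}$, where $\mu_{\chi_{k}}$ acts coordinatewise with weights $\chi_{0},\dots,\chi_{k-1}$; as $V_{k}$ is compact and $V_{k-1}\subset V_{k}$ a closed cofibration, $V_{k}/V_{k-1}$ is the one-point compactification of $\C^{k}/\mu_{\chi_{k}}$, i.e.\ the unreduced suspension of the weighted lens space $L_{k}=S^{2k-1}/\mu_{\chi_{k}}$. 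Since $\mu_{\chi_{k}}$ acts trivially on $H^{*}(S^{2k-1};\Q)$, each $L_{k}$ is a rational homology $(2k-1)$-sphere, so $\tilde H^{*}(L_{k};\Z)$ has rank one in degree $2k-1$ and is torsion (annihilated by a power of $\chi_{k}$) in degrees $1,\dots,2k-2$. Feeding the cofibre sequences $V_{k-1}\to V_{k}\to\Sigma L_{k}$ into the long exact cohomology sequence and controlling the connecting homomorphisms, one shows by induction that $H^{*}(V_{k};\Z)$ is free with one generator in each even degree up to $2k$. The accounting is cleanest if one works simultaneously with $\Z/p$-coefficients for each prime $p$ and with $\Q$-coefficients, the latter giving $H^{*}(\PP(\chi);\Q)\cong H^{*}(\CP^{n};\Q)$ directly from the rational Gysin sequence of the almost free circle action $S^{2n+1}\to\PP(\chi)$: once $\dim_{\Z/p}H^{*}(\PP(\chi);\Z/p)=n+1$ for all $p$, universal coefficients forces the integral cohomology to be torsion free of the stated shape.

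For part (ii), injectivity of $\phi^{*}$ follows from (i) together with the fact that $\phi^{*}$ is a rational isomorphism in each degree --- obtained by comparing the Gysin sequences of $S^{2n+1}\to\PP(\chi)$ and $S^{2n+1}\to\CP^{n}$, using that $\phi$ is covered by an equivariant self-map of $S^{2n+1}$ --- since a homomorphism of finitely generated free abelian groups that is a $\Q$-isomorphism is injective. It remains to identify $l_{i}$, the index of $\phi^{*}\bigl(H^{2i}(\PP(\chi))\bigr)$ in $\Z\eta^{i}$. This is a local computation: $\phi$ sends the standard affine cell $\C^{k}\subset\CP^{n}$ onto the stratum $\C^{k}/\mu_{\chi_{k}}\subset\PP(\chi)$ by the map induced by $(w_{0},\dots,w_{k-1})\mapsto(w_{0}^{\chi_{0}},\dots,w_{k-1}^{\chi_{k-1}})$, hence on one-point compactifications is a map $S^{2k}\to\Sigma L_{k}$ of computable degree; threading the long exact sequences for $\CP^{n}$ and $\PP(\chi)$ through these maps and using the cohomology of the $L_{k}$ found above, an induction on $k$ gives $l_{i}=\prod_{p}r_{n-i+1}(\chi;p)\cdots r_{n}(\chi;p)$, the product over all primes of the $i$ largest $p$-parts among $\chi_{0},\dots,\chi_{n}$.

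The principal obstacle is the torsion bookkeeping in part (i): a priori the middle-degree torsion of the weighted lens spaces $L_{k}$ could propagate through the inductive long exact sequences and survive in $H^{*}(\PP(\chi))$, and showing it always cancels is exactly where the normalization of $\chi$ is used --- through the fact that the two smallest $p$-parts are trivial for every prime $p$. Determining the exact value of $l_{i}$ in part (ii) is a subsidiary difficulty, again best met via the stratification and a careful comparison with the $\CP^{n}$ side rather than by a direct global argument.
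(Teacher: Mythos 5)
This theorem is not proved in the paper at all: it is imported verbatim from Kawasaki, so there is no in-paper argument to compare against, and your sketch is in effect a reconstruction of Kawasaki's original proof (the same filtration $\PP(\chi_0,\dots,\chi_{k-1})\subset\PP(\chi_0,\dots,\chi_k)$ with cofibre the suspension of a lens complex, and the same comparison map $\phi$). Within that framework, your part~(iii) is complete and correct --- the termwise domination of the $i+j$ largest $p$-parts by the $i$ largest together with the $j$ largest does give $l_{i+j}\mid l_i l_j$, and injectivity of $\phi^{*}$ then forces the product formula.

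The genuine gap is the step you yourself flag as ``controlling the connecting homomorphisms.'' To kill $H^{2j+1}(V_k)$ you must show that $\delta\colon H^{2j}(V_{k-1})\to H^{2j+1}(V_k,V_{k-1})\cong\tilde H^{2j}(L_k)$ is \emph{onto}, i.e.\ that the restriction $H^{2j}(V_k)\to H^{2j}(V_{k-1})\cong\Z$ has image of index exactly the order of $\tilde H^{2j}(L_k)$. As sketched, this is circular: the only handle you offer on that index is $\phi^{*}\xi_j^{(k)}=l_j^{(k)}\eta^{j}$, which is part of what is being established at stage $k$; the exact sequence alone only bounds the index from above by the order of the torsion group. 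Kawasaki breaks the circle by proving the lens-complex computation (the paper's Theorem~\ref{thm:kawasaki-lens}) \emph{simultaneously} with Theorem~\ref{thm:kawasaki-wps}, using the Gysin-type sequence of the Seifert fibration $L(k;\chi)\to\PP(\chi)$ to pin down the torsion orders $q=l_j(\chi,k)/l_j(\chi)$; it is the exact matching of these orders with the cokernel of the restriction map that makes the odd-degree torsion cancel. Your sketch takes the lens-space cohomology as known input without establishing those orders, so the decisive computation is missing. A smaller inaccuracy: normalization of $\chi$ is not what saves the cancellation --- Kawasaki's theorem holds for arbitrary weight vectors, and the hypothesis that the two smallest $p$-parts are $1$ is used elsewhere in the paper (in Lemma~\ref{local-homology-wps}), not here; normalization only affects the closed form of the $l_i$.
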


\begin{remarks}\label{rem:simfree}
Kawasaki's proof of Theorem \ref{thm:kawasaki-wps} shows that the
integral homology groups $H_*(\PP(\chi))$ are finitely generated and 
torsion-free, and therefore isomorphic to $\Hom(H^*(\PP(\chi)),\Z)$ 
by the Universal Coefficient Theorem.

Moreover, \cite[Sec.~3.2]{Fulton:1993} and
\cite[Cor.~7.2]{Illman:1983} confirm that $P(\chi)$ is a simply
connected finite CW complex, for every choice of $\chi$.
\end{remarks}

Kawasaki also determined the cohomology of the generalized lens
space~$L(k;\chi) = S^{2n+1}/\Z_{k}\langle\chi\rangle$, where in this
case $\chi$ describes the weights of the $k$-th roots of unity.  The
answer depends on the augmented weight
vector~$(\chi,k)=(\chi_{0},\dots,\chi_{n},k)$.

\begin{theorem}[{\cite[Thm.~2]{Kawasaki:1973}}]
  \label{thm:kawasaki-lens}
  The non-zero cohomology groups of~$L=L(k;\chi)$ are
  $H^{0}(L)\cong H^{2n+1}(L)\cong\Z$ and
  $H^{2i}(L) \cong \Z_{q}$ for~$1\le i\le n$, where
  $q = l_{i}(\chi,k)/l_{i}(\chi)$.
\end{theorem}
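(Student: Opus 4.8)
The plan is to prove Theorem~\ref{thm:kawasaki-lens} by relating the generalized lens space $L=L(k;\chi)$ to the weighted projective space $\PP(\chi,k)$, exploiting the fact that $L$ sits inside a circle bundle over $\PP(\chi,k)$ — or more precisely, by comparing the two quotients $S^{2n+1}/\Z_k\langle\chi\rangle$ and $S^{2n+1}/S^1\langle\chi\rangle$. The key observation is that the residual action of $S^1/\Z_k \cong S^1$ on $L$ has quotient $\PP(\chi,k)$, exhibiting $L$ as the total space of an orbifold circle bundle (a lens-space bundle in the classical sense) over $\PP(\chi,k)$. Equivalently one may work with the Gysin sequence of the principal $S^1$-bundle $S^{2n+1}\to\PP(\chi,k)$ after noting that $L$ is the pullback along the $k$-fold covering of the fibre.

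First I would set up the comparison carefully: the inclusion $\Z_k\langle\chi\rangle\subset S^1\langle\chi\rangle$ induces a free action of the quotient circle on $L$ with orbit space $\PP(\chi,k)$, so there is a fibration $S^1\to L\to\PP(\chi,k)$ whose Euler class $e\in H^2(\PP(\chi,k))$ I would identify. Using the same circle of ideas, $S^{2n+1}\to\PP(\chi,k)$ is a principal $S^1$-bundle with Euler class a generator of $H^2(\PP(\chi,k))\cong\Z$ up to the appropriate weight factor; the map $L\to\PP(\chi,k)$ is then the fibrewise $k$-fold cover, so its Euler class is $k$ times that of $S^{2n+1}$, suitably interpreted. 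Second, I would write down the Gysin sequence
\begin{equation*}
  \cdots \to H^{i-2}(\PP(\chi,k)) \xrightarrow{\,\smile e\,} H^{i}(\PP(\chi,k)) \to H^{i}(L) \to H^{i-1}(\PP(\chi,k)) \to \cdots
\end{equation*}
and feed in Kawasaki's description of $H^*(\PP(\chi,k))$ from Theorem~\ref{thm:kawasaki-wps}: in degree $2i$ the group is free on $\xi_i(\chi,k)$, and the cup product with $e$ is multiplication by the ratio $l_{i+1}(\chi,k)/l_i(\chi,k)$ read off from the formula $\xi_1\xi_i = (l_1 l_i/l_{i+1})\,\xi_{i+1}$. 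Comparing with the analogous Gysin sequence for $\CP^n\to\PP(\chi,k)$ — or directly with the standard lens space $L(k;1,\dots,1)$ — pins down that $e$ corresponds, under $\phi^*$, to $l_1(\chi,k)$ times the generator, while on $\PP(\chi)$ the corresponding class is $l_1(\chi)$ times the generator.

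Third, I would extract the cohomology of $L$ degree by degree. Since $H^*(\PP(\chi,k))$ is concentrated in even degrees and torsion-free, the connecting maps vanish and the Gysin sequence splits into short exact sequences $0\to H^{2i}(\PP(\chi,k))/e\cdot H^{2i-2}(\PP(\chi,k)) \to H^{2i}(L)\to \ker\bigl(e\colon H^{2i-1}\to H^{2i+1}\bigr)\to 0$; the kernel term vanishes in the relevant range since odd cohomology of $\PP(\chi,k)$ is zero, giving $H^{2i}(L)\cong \Z/(l_{i}(\chi,k)/l_{i-1}(\chi,k))$ for $1\le i\le n$ once the multiplication-by-$e$ map is computed — but this must be reconciled with the stated answer $H^{2i}(L)\cong\Z_q$ with $q=l_i(\chi,k)/l_i(\chi)$, so I will need the telescoping identity that the cokernel of $e$ acting through degree $2i$ has order $l_i(\chi,k)/l_i(\chi)$, which follows from the multiplicativity relation in Theorem~\ref{thm:kawasaki-wps} applied to both $(\chi,k)$ and $\chi$ and the fact that $e$ is detected by the weight $k$. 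The top two groups $H^0(L)\cong H^{2n+1}(L)\cong\Z$ come from the orientability of the closed $(2n+1)$-manifold $L$ together with Poincaré duality, or equally from the tail of the Gysin sequence.

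The main obstacle I anticipate is the precise bookkeeping of Euler classes and the identification of the multiplication-by-$e$ map with the correct arithmetic ratio of $l$-invariants — in particular, making rigorous the claim that passing from $\chi$ to the augmented vector $(\chi,k)$ corresponds on cohomology exactly to the change of Euler class, and that the resulting cokernels telescope to give $l_i(\chi,k)/l_i(\chi)$ rather than $l_i(\chi,k)/l_{i-1}(\chi,k)$. Getting the indexing right (Kawasaki's $r_j$ run from $r_0$ to $r_n$, and the $l_i$ involve the top $i$ of them) and keeping track of which generator of $H^2(\PP(\chi,k))$ is hit will require care; a clean way to sidestep some of this is to compare directly with the classical lens space and use naturality of the Gysin sequence under the map $\phi_{(\chi,k)}$.
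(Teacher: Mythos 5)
The paper offers no proof of this statement: it is quoted directly from Kawasaki, so the only comparison available is with his argument, which relates $L(k;\chi)$ to $\PP(\chi,k)$ by exhibiting $\PP(\chi,k)$ as the mapping cone of the projection $L(k;\chi)\to\PP(\chi)$ (this is exactly Lemma~\ref{wps-to-lens-general} with $I=\{0,\dots,n\}$: the complement of $\PP(\chi)\subset\PP(\chi,k)$ is the open cone on $L(k;\chi)$). The long exact sequence of the pair $(\PP(\chi,k),\PP(\chi))$ then identifies $\tilde H^{*}(L)$ with the kernel and cokernel of the restriction $H^{*}(\PP(\chi,k))\to H^{*}(\PP(\chi))$, which sends $\xi_i(\chi,k)$ to $\bigl(l_i(\chi,k)/l_i(\chi)\bigr)\xi_i(\chi)$ because both classes pull back under $\phi$ to $l_i\eta^i$; this is where the ratio $q=l_i(\chi,k)/l_i(\chi)$ comes from.

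Your setup has a genuine error that the rest of the argument cannot survive. The residual circle $S^1/\Z_k$ acting on $L=S^{2n+1}/\Z_k\langle\chi\rangle$ has quotient $S^{2n+1}/S^1\langle\chi\rangle=\PP(\chi)$, \emph{not} $\PP(\chi,k)$; a circle bundle over $\PP(\chi,k)$ would have real dimension $2n+3$, while $\dim L=2n+1$, so the fibration $S^1\to L\to\PP(\chi,k)$ on which your Gysin sequence rests does not exist. Replacing the base by the correct quotient $\PP(\chi)$ does not rescue the method: the residual action has isotropy $\Z_{\gcd(k,\chi_i)}$ along the coordinate subspaces, so $L\to\PP(\chi)$ is only a Seifert fibration and there is no integral Gysin sequence. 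Concretely, for $\chi=(1,1,4)$ and $k=2$ the theorem gives $H^2(L)=0$ and $H^4(L)=\Z_2$, whereas a Gysin sequence over $\PP(1,1,4)$ with Euler class $c\,\xi_1$ would force $|c|=1$ from degree $2$ and then yield $H^4(L)\cong\Z\xi_2/(c\,\xi_1^2)=\Z_4$ since $\xi_1^2=4\xi_2$; no integral Euler class is consistent with both degrees. The tension you already noticed between $l_i(\chi,k)/l_{i-1}(\chi,k)$ and $l_i(\chi,k)/l_i(\chi)$ is a symptom of this (for $\chi=(1,1,1)$, $k=4$ the former gives $1$ in degree $4$ while the correct answer for the classical lens space is $\Z_4$), and it cannot be fixed by a telescoping identity. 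Two smaller points: $L$ is in general an orbifold rather than a manifold, so the appeal to Poincar\'e duality for $H^{2n+1}(L)\cong\Z$ also needs replacing; and the cofibration argument sketched above delivers that group for free as $H^{2n+2}(\PP(\chi,k),\PP(\chi))\cong H^{2n+2}(\PP(\chi,k))\cong\Z$.
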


\section{Classification up to homeomorphism}
\label{sec:homeo}

\def\kk{q}
\def\qq{q'}

Consider a point~$z\in\PP(\chi)$.
Let $I$~and~$J$ be the subsets of~$\{0,\dots,n\}$ corresponding
to the zero and non-zero homogeneous coordinates of~$z$, respectively,
and let $\kk=\gcd\{\chi_{i}:i\in J\}$.
Also, let $U_{I}=\{ [z_{0}:\dots:z_{n}]:\text{$z_{i}\ne 0$ for $i\notin I$}\}$,
and write $\chi_{I}\in\Z^{I}$ for the weights indexed by~$I$.

\begin{lemma}[{\cf~\cite[\S 5.15]{Fletcher:2000}}]
  \label{wps-to-lens-general}
  There is an isomorphism of algebraic varieties
  \begin{equation*}
    U_{I} \cong (\Cstar)^{|J|-1}\times \C^{I} \!/\, 
  \Z_{\kk}\langle \chi_{I}\rangle,
  \end{equation*}
  sending
  $z$ to a point of the form~$(\tilde z,0)$.
\end{lemma}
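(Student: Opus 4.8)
The plan is to write down the isomorphism explicitly and check it is well defined and invertible. First I would pass to the description of $\PP(\chi)$ as the quotient of $\C^{n+1}\setminus\{0\}$ by the weighted $\Cstar$-action. The open set $U_I$ consists of classes $[z_0:\dots:z_n]$ with $z_j\ne 0$ for all $j\in J=\{0,\dots,n\}\setminus I$. Pick a distinguished index, say $j_0\in J$; using the $\Cstar$-action I can try to normalise $z_{j_0}=1$, but because the weight $\chi_{j_0}$ need not be $1$ this only pins down $z_{j_0}$ up to the group $\mu_{\chi_{j_0}}$ of $\chi_{j_0}$-th roots of unity. To get a cleaner picture, set $\kk=\gcd\{\chi_j:j\in J\}$ and write $\chi_j=\kk\,\chi_j'$ for $j\in J$; the residual action after using up the "$\kk$-th root" part of $\Cstar$ is a $\Z_\kk$-action, and the classes $\chi_j'$ for $j\in J$ together span $\Z$ (so the $(\Cstar)^{J}$-part of the action, modulo that $\Z_\kk$, is a free action with quotient $(\Cstar)^{|J|-1}$). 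This produces the factor $(\Cstar)^{|J|-1}$ from the non-zero coordinates indexed by $J$, and leaves the coordinates indexed by $I$ living in $\C^I$ with the residual $\Z_\kk\langle\chi_I\rangle$-action; the point $z$, having all its $I$-coordinates zero, maps to $(\tilde z,0)$.

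In more detail, the map $\C^{n+1}\setminus\{0\}\supset\{z:z_j\ne0\ \forall j\in J\}\to(\Cstar)^{|J|-1}\times\C^I$ I have in mind records, on the $J$-part, the obvious $\Cstar$-invariant monomial coordinates (ratios $z_j^{a}/z_{j'}^{b}$ with $a\chi_{j'}=b\chi_j$, enough of them to cut the torus $(\Cstar)^{|J|}$ down by the diagonal weighted $\Cstar$ to $(\Cstar)^{|J|-1}$), and on the $I$-part rescales each $z_i$ using a fixed choice of $\kk$-th root of (a monomial in the $z_j$) so that what remains is well defined only up to $\mu_\kk$ acting on $\C^I$ with weights $\chi_I$. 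I would then verify: (a) the formula descends to $U_I$, i.e.\ is invariant under the weighted $\Cstar$-action; (b) the resulting map to $(\Cstar)^{|J|-1}\times\C^I/\Z_\kk\langle\chi_I\rangle$ is a morphism of varieties; (c) it is bijective, by writing down the inverse (choose representatives, invert the monomial change of coordinates on $(\Cstar)^J$, and lift the $\C^I$-part back); (d) the inverse is also a morphism, so that this is an isomorphism of algebraic varieties. Tracking the reference \cite[\S 5.15]{Fletcher:2000} for the toric-chart computation, the case $I=\emptyset$ (so $U_I$ is the generic torus orbit closure's open stratum) is the model, and the general $U_I$ is obtained by the same computation with the variables split into the $J$-block and the $I$-block.

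The main obstacle is bookkeeping rather than conceptual: one must choose the distinguished index in $J$ and the auxiliary $\kk$-th roots consistently, and check that different choices differ precisely by the $\Z_\kk\langle\chi_I\rangle$-action, so that the target is genuinely the stated quotient and not some other form of it. A secondary point to be careful about is that $\gcd$ of the $\chi_j'$ over $j\in J$ is $1$, which is exactly what makes the torus quotient $(\Cstar)^{|J|}/\Cstar$ equal to $(\Cstar)^{|J|-1}$ (rather than a quotient by a finite group as well); this uses that $\kk$ was defined as the full $\gcd$ over $J$. With those choices fixed, well-definedness, the morphism property in both directions, and bijectivity are each a short verification, and the statement that $z$ goes to a point of the form $(\tilde z,0)$ is immediate from the construction since the $I$-coordinates of $z$ vanish.
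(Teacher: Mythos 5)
Your argument is correct and is essentially the paper's: the paper packages your monomial-coordinate bookkeeping as the choice of a complementary subtorus $T''\cong(\Cstar)^{|J|-1}$ to the image $T'$ of the weighted map $\Cstar\to(\Cstar)^{J}$ (whose kernel is your $\Z_{q}$ with $q=\gcd\{\chi_j:j\in J\}$), after which $U_{I}=(T''\times T'\times\C^{I})/\Cstar$ collapses immediately to $T''\times\C^{I}/\Z_{q}\langle\chi_{I}\rangle$. Your observation that the reduced weights over $J$ are coprime is exactly what guarantees that $T'$ is a primitive, hence complemented, subtorus, so the two arguments coincide in substance.
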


Observe that $\C^{I} / \Z_{\kk}\langle \chi_{I}\rangle$ is the
unbounded cone over $L(\kk;\chi_{I})$.

\begin{proof}
  The weight vector~$\chi_{J}$ determines a morphism~$\Cstar\to(\Cstar)^{J}$
  with kernel~$\Z_{\kk}$. Let $T'$ be its image and $T''\cong(\Cstar)^{|J|-1}$
  a torus complement. Then
\begin{equation*}
    U_{I} =
    \bigl( (\Cstar)^{J}\times \C^{I}\bigr)\,\big/\,\Cstar\langle \chi\rangle
    = \bigl(T''\times T'\times\C^{I} \bigr)\,\big/\,\Cstar\langle \chi\rangle
    = T''\times \C^{I}\!/\,\Z_{\kk}\langle \chi_{I}\rangle.
    \qedhere
  \end{equation*}
\end{proof}

\begin{remark}
  \label{rem:cell-decomposition}
  If $\chi_{0}=1$ and $z=[1:0:\dots:0]$, then $U_{I}\cong\C^{n}$.  If
  the weights form a divisor chain, we have $\PP(\chi)\setminus
  U_{I}=\PP(\chi_{1},\dots,\chi_{n})=
  \PP(1,\chi_{2}/\chi_{1},\dots,\chi_{n}/\chi_{1})$;
  hence we obtain an inductive decomposition of~$\PP(\chi)$ into
  $n+1$~cells~$*$,~$\C$,~$\C^{2}$,~\dots,~$\C^{n}$.
\end{remark}

\begin{lemma}
  \label{local-homology-wps}
  There is an isomorphism 
   $H^{2n-1}\bigl(\PP(\chi),\PP(\chi)\setminus\{z\}\bigr) 
   \cong \Z_{\kk}$.
\end{lemma}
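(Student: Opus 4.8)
The plan is to compute the local cohomology group using the explicit local model for a neighbourhood of $z$ provided by Lemma~\ref{wps-to-lens-general}, together with excision and the suspension isomorphism. First I would invoke Lemma~\ref{wps-to-lens-general}: a neighbourhood of $z$ in $\PP(\chi)$ is homeomorphic to $(\Cstar)^{|J|-1}\times C$, where $C = \C^{I}/\Z_{\kk}\langle\chi_{I}\rangle$ is the (open) cone on the generalized lens space $L(\kk;\chi_{I})$, with $z$ corresponding to the point $(\tilde z, 0)$ with $0$ the cone point. By excision, $H^{2n-1}\bigl(\PP(\chi),\PP(\chi)\setminus\{z\}\bigr)$ is isomorphic to $H^{2n-1}\bigl((\Cstar)^{|J|-1}\times C,\, (\Cstar)^{|J|-1}\times C \setminus \{(\tilde z,0)\}\bigr)$, and a further excision (shrinking the $(\Cstar)^{|J|-1}$ factor to a contractible chart and then to a point, using that $(\tilde z,0)$ has a product neighbourhood) reduces this to $H^{2n-1}(C, C\setminus\{0\})$. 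Here I would use that $\dim_{\R} C = 2|I|$ and $|I| + |J| = n+1$, so $2|I| = 2n + 2 - 2|J|$; the point is to keep careful track of the degree shift through the $(\Cstar)^{|J|-1}$ factor, which contributes a shift by its real dimension $2(|J|-1) = 2|J|-2$, so that $2n-1 - (2|J|-2) = 2(|I|) - 1 = \dim_{\R}C - 1$, confirming we land in the expected degree.

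Next I would identify $H^{*}(C, C\setminus\{0\})$. Since $C$ is contractible (it is a cone), the long exact sequence of the pair collapses to give $H^{k}(C,C\setminus\{0\})\cong \tilde H^{k-1}(C\setminus\{0\})$ for all $k\ge 1$. The punctured cone $C\setminus\{0\}$ deformation retracts onto its base $L=L(\kk;\chi_{I})$, which is a $(2|I|-1)$-dimensional generalized lens space. Therefore $H^{2|I|-1}(C,C\setminus\{0\})\cong \tilde H^{2|I|-2}(L)$. Applying Kawasaki's Theorem~\ref{thm:kawasaki-lens} to $L = L(\kk;\chi_{I})$, whose top interesting cohomology in even degree $2i$ with $1\le i\le |I|-1$ is $\Z_{q_i}$ with $q_i = l_i(\chi_I,\kk)/l_i(\chi_I)$: in particular for $i = |I|-1$ (so $2i = 2|I|-2$) we get $H^{2|I|-2}(L)\cong\Z_{q}$ with $q = l_{|I|-1}(\chi_I,\kk)/l_{|I|-1}(\chi_I)$. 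Here I must check, using the definition of $\kk = \gcd\{\chi_i : i\in J\}$ and the fact that $(\chi_I,\kk)$ is (after normalization) essentially controlled by $\kk$, that this quotient of products of $p$-contents collapses to exactly $\kk$; the definitions $r_j(\chi;p)$ and $l_i$ from Section~\ref{sec:kawasaki} make this a direct, if slightly fiddly, arithmetic comparison of $p$-adic valuations.

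The main obstacle I anticipate is bookkeeping rather than conceptual: one is juggling several index sets ($I$, $J$, and the shift from the torus factor) and the product-over-primes formulas for $l_i$, and it is easy to be off by one in the dimension count or to mis-identify which $l_i$-ratio appears. In particular, the cleanest route may be to avoid Theorem~\ref{thm:kawasaki-lens} as a black box and instead directly compute $\tilde H^{2|I|-2}$ of the lens space $L(\kk;\chi_I)$: this is the top-but-one cohomology, which by Poincaré–Lefschetz duality (or the cell structure) is $H_1$ or $H^2$ in the appropriate range, and one checks it equals $\Z/\kk$ because the first obstruction to triviality of the $\Z_\kk$-action on $S^{2|I|-1}$ is detected by the order $\kk$ of the acting group. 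I would therefore organize the proof as: (i) local model and excision; (ii) reduction to the punctured cone and its base lens space; (iii) identification of the relevant cohomology group of the lens space with $\Z_{\kk}$, either via Theorem~\ref{thm:kawasaki-lens} with the arithmetic check, or directly. A final sanity check: when $z$ is a manifold point (e.g.\ $\kk=1$, as when $\chi_0=1$ and $z=[1:0:\dots:0]$ so $U_I\cong\C^n$), the group is $\Z_1 = 0$, which is consistent with $\PP(\chi)$ being a manifold there and $2n-1 \ne 2n$ being the wrong degree for the local homology of a $2n$-manifold — wait, rather it is consistent because the reduced cohomology of $S^{2n-1}$ vanishes in degree $2n-2$.
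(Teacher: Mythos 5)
Your route is essentially the paper's: the local model of Lemma~\ref{wps-to-lens-general}, excision to reduce to the pair $(C,C\setminus\{0\})$ (the paper strips off the $(\Cstar)^{|J|-1}$ factor with the relative K\"unneth formula rather than your join/suspension count, but the degree bookkeeping agrees), identification with $\tilde H^{2|I|-2}(L(q;\chi_I))$, and then Kawasaki's Theorem~\ref{thm:kawasaki-lens}. The genuine gap is the step you defer as ``slightly fiddly arithmetic'': proving that $l_{m}(\chi_I,q)/l_{m}(\chi_I)=q$, where $m=|I|-1$, is the mathematical content of the lemma, and it is \emph{false} for a general weight vector --- normalization is essential, and you never say where it enters. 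The missing observation is this: if a prime $p$ divides $q=\gcd\{\chi_j:j\in J\}$, then every weight indexed by $J$ is divisible by $p$, so normalization of $\chi$ forces at least \emph{two} of the $m+1$ weights in $\chi_I$ to be prime to $p$; hence passing from the product of the $m$ largest $p$-contents of $\chi_I$ (all of them except one $1$) to the product of the $m$ largest $p$-contents of the $m+2$ numbers $(\chi_I,q)$ (all of them except two $1$'s) multiplies the product by exactly the $p$-content of $q$. One must also treat $m=0$ separately, since Theorem~\ref{thm:kawasaki-lens} then gives only $\tilde H^{0}(L)=0$; this matches because normalization forces $q=1$ in that case.

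Your proposed ``cleaner'' alternative --- computing $\tilde H^{2|I|-2}(L(q;\chi_I))$ by Poincar\'e--Lefschetz duality and reading off $\Z_{q}$ as the first obstruction detected by the order of the acting group --- does not work. The $\Z_{q}$-action on $S^{2|I|-1}$ with weights $\chi_I$ is in general not free, because the weights in $\chi_I$ may share factors with $q$: take $\chi=(1,1,2,4)$ and $z=[0:0:0:1]$, so that $q=4$ and $\chi_I=(1,1,2)$. Then $L(q;\chi_I)$ is not a manifold, and its lower even cohomology groups are genuinely not $\Z_{q}$: here $H^{2}(L)\cong\Z_{2}$ while $H^{4}(L)\cong\Z_{4}$. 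Only the top even-degree group equals $\Z_{q}$, and only by virtue of the normalization argument above, so there is no shortcut around Theorem~\ref{thm:kawasaki-lens} together with the $p$-content computation.
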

\begin{proof}
  Set $X = (\Cstar)^{|J|-1}$, $Y = \C^{I} / \Z_{\kk}\langle \chi_{I}\rangle$
  and $m=|I|-1$.
  Note that $X$ is a manifold of dimension~$2(n-m-1)$, 
  so that $H^{*}(X,X\setminus\{\tilde z\})$ is isomorphic to~$\Z$
  in dimension~$2(n-m-1)$ and zero otherwise. 
  Excision, Lemma~\ref{wps-to-lens-general} and the Künneth formula for
  relative cohomology therefore imply
  \begin{align*}
    H^{*}\bigl(\PP(\chi),\PP(\chi)\setminus\{z\}\bigr)
    &\cong H^{*}\bigl(U_{I},U_{I}\setminus\{z\}\bigr) \\
    &\cong H^{*}(X\times Y, (X\setminus\{\tilde z\})
    \times Y \cup X\times(Y\setminus\{0\})) \\
    &\cong H^{*}(X,X\setminus\{\tilde z\})\otimes H^{*}(Y,Y\setminus\{0\}),\\
\intertext{because $H^{*}(X,X\setminus\{\tilde z\})$ is free. 
    In particular,}
    H^{2n-1}\bigl(\PP(\chi),\PP(\chi)\setminus\{z\}\bigr)
    &\cong H^{2m+1}\bigl(Y,Y\setminus\{0\}\bigr)
    \cong \tilde H^{2m}(L(\kk;\chi_{I})).
  \end{align*}

  If $m=0$, then $\kk=1$ because $\chi$ is normalized, and the claim holds.
  Otherwise, Theorem~\ref{thm:kawasaki-lens} gives
  $H^{2m}(L(\kk;\chi_{I})) \cong \Z_{\qq}$, where
  the $p$-content of~$\qq$ is given by
  \begin{equation}
    \label{eq:product-r}
    \hbox{$p$-content of} \;\; \frac{l_{m}(\chi_{I},\kk)}{l_{m}(\chi_{I})}
    = \prod_{i=1}^{m} \frac{r_{m+2-i}(\chi_{I},\kk;p)}{r_{m+1-i}(\chi_{I};p)}.
  \end{equation}
  We have to show $\qq=\kk$, which means that $\qq$~and~$\kk$ have the
  same $p$-content for all~$p$.  This is clearly true if $\kk$ is not
  divisible by~$p$.  Otherwise, $\chi_{I}$ inherits from the
  normalized weight vector~$\chi$ two weights not divisible by~$p$.
  (Recall that $\kk$ is the gcd of the weights appearing in~$\chi$,
  but not in~$\chi_{I}$.)  Hence, $r_{1}(\chi;p)=1$, and the numerator
  of~\eqref{eq:product-r} differs from the denominator by the
  $p$-content of~$\kk$. 
  This finishes the proof.
\end{proof}

\begin{proof}[Proof of Theorem~\ref{thm:classification-homeomorphism}]
  By the remarks preceding the theorem, we only have to prove the
  implication~$\eqref{thm:a3}\Rightarrow\eqref{thm:a1}$.  In order to
  do so, we show how to read off the normalized weights from
  topological invariants of a weighted projective space~$\PP(\chi)$.
   For~$z\in\PP(\chi)$, let $\qq(z)$ be the order of the finite
  group~\smash{$H^{2n-1}\bigl(\PP(\chi),\PP(\chi)\setminus\{z\}\bigr)$}.
  Lemma~\ref{local-homology-wps} implies that for all~$d\ge 1$ the
  space
  \begin{equation*}
    X(d) = \bigl\{ z\in\PP(\chi) : d \mid \qq(z) \bigr\}
  \end{equation*}
  is again a weighted projective space or empty. In fact,
  \begin{equation*}
    X(d) = \bigl\{ [z_{0}:\dots:z_{n}] \in\PP(\chi) :
      \text{$z_{i} = 0$ if $d \nmid \chi_{i}$} \bigr\}
  \end{equation*}
  because $d$ divides $\qq(z)=\kk$ if and only if it divides
  $\chi_{i}$ for all~$i$ such that $z_{i}\ne0$. For each~$d$, the
  dimension of~$X(d)$ (which equals the degree of the highest
  non-vanishing cohomology group) therefore tells us the number of
  weights divisible by~$d$.  This determines the normalized weights
  completely up to order.
\end{proof}

\section{The Mislin genus}\label{sec:mislin}

This section relies heavily on the theory of localization and 
homotopy pullbacks. We refer readers to 
\cite{HiltonMislinRoitberg:1975}, especially Chapter~II, and 
to \cite[Chap.~7]{Strom:2011}, for background information.

Throughout the section, $X$, $Y$, and $Z$ denote simply connected 
CW~complexes. A map~$f\colon X\to Y$ is therefore a homotopy equivalence 
(written $X\simeq Y$) if and only if it induces an isomorphism~$H_*(f)$ 
of integral homology; in this case, $f^{-1}$ denotes a homotopy inverse 
for~$f$. 

Given any set~$\cP$ of primes, the algebraic localization of~$\Z$
is denoted by~$\Z_{\cP}$, and the homotopy theoretic localization of
$X$ by $X_{\cP}$; the latter is also a CW complex. Every~$X$ admits a
localization map~$l_{\cP}\colon X\to X_{\cP}$, which induces an
isomorphism $H_*(l_{\cP};\Z_{\cP})$, and every $f$ admits a 
localization $f_{\cP}\colon X_{\cP}\to Y_{\cP}$, for which the square
\begin{equation*}
 \begin{CD}
   X@>f>>Y\\
     @Vl_{\cP}VV@VVl_{\cP}V\\
     X_{\cP}@>>f_{\cP}>Y_{\cP}
 \end{CD}
\end{equation*}
is homotopy commutative. Any map~$g\colon X_{\cP}\to Y_{\cP}$ of 
localized spaces is a homotopy equivalence if and only if it 
induces an isomorphism $H_*(g;\Z_{\cP})$.

If $\cP$ is empty, then the localization map $l_\emptyset$ is 
\emph{rationalization}, and is denoted by 
$l_0\colon X\to X_0$; likewise, $f_\emptyset$ is denoted by
$f_0\colon X_0\to Y_0$. If $\cP$ consists of a single prime $p$, then 
the localization map $l_{\{p\}}$ is abbreviated to $l_p\colon X\to X_p$, 
and $f_{\{p\}}$ is abbreviated to~$f_p\colon X_p\to Y_p$. If 
$\cP$ contains all primes, then $l_{\cP}$ is a homotopy equivalence.

The \emph{homotopy pullback} of a diagram~$X\to Z\leftarrow Y$ 
may be constructed by replacing either map with a fibration,
and pulling it back along the other in the standard fashion. 
The resulting square is unique up to equivalence of diagrams in the 
homotopy category \cite[\S 7.3]{Strom:2011}; in particular, the 
homotopy pullback is well-defined up to homotopy equivalence.

For any set of primes $\cP$, the rationalization of $X_{\cP}$ is 
homotopy equivalent to $X_0$, so the rationalization map may be 
expressed as $l_0\colon X_{\cP}\to X_0$. If $\cP$~and~$\cQ$ are 
disjoint, then the homotopy pullback of
\begin{equation}\label{eq:locpq}
    \begin{CD}
      X_{\cQ}@>{l_0}>>X_0@<l_0<<X_{\cP}
    \end{CD}\,
  \end{equation}
 is $X_{\cP\cup\cQ}$ (see \cite[Prop.~2.9.3]{Neisendorfer:2010} 
 or~\cite[proof of Thm.~7.13]{HiltonMislinRoitberg:1975}). 

\begin{lemma}\label{thm:homotopy-pullback}
  Given two disjoint sets $\cP$ and~$\cQ$ of primes,
  let $f\colon Y_{\cP}\larrow{\simeq} Z_{\cP}$ and 
  $g\colon Y_{\cQ}\larrow{\simeq} Z_{\cQ}$ be
  homotopy equivalences, and define $h=f_0g_0^{-1}$; 
  then $Y_{\cP\cup\cQ}$ is the homotopy pullback of the diagram
  \begin{equation}\label{eq:homotopy-pullback}
    \begin{CD}
      Z_{\cQ}@>{hl_0}>>Z_0@<l_0<<Z_{\cP}
    \end{CD}\,.
  \end{equation}
  If also there exist homotopy equivalences 
  $d\colon Z_{\cP}\to Z_{\cP}$ and~$e\colon Z_{\cQ}\to Z_{\cQ}$
  such that $h\simeq d_0e_0^{-1}$, then $Y_{\cP\cup\cQ}$ and 
  $Z_{\cP\cup\cQ}$ are homotopy equivalent.
\end{lemma}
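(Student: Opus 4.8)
The plan is to establish the two assertions of Lemma~\ref{thm:homotopy-pullback} in turn, using the description of $X_{\cP\cup\cQ}$ as the homotopy pullback of~\eqref{eq:locpq} recalled just above.

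For the first assertion, I would start from the fact that $Y_{\cP\cup\cQ}$ is the homotopy pullback of
\begin{equation*}
  \begin{CD}
    Y_{\cQ}@>{l_0}>>Y_0@<l_0<<Y_{\cP}
  \end{CD}
\end{equation*}
and transport this square along the given equivalences. Concretely, $f\colon Y_{\cP}\to Z_{\cP}$, $g\colon Y_{\cQ}\to Z_{\cQ}$, and their rationalizations $f_0$, $g_0$ fit into homotopy commutative squares with the localization maps $l_0$. Replacing $Y_{\cP}$ by $Z_{\cP}$ via~$f$, $Y_{\cQ}$ by $Z_{\cQ}$ via~$g$, and $Y_0$ by $Z_0$ via~$g_0$, the leftmost rationalization map $l_0\colon Y_{\cP}\to Y_0$ becomes $g_0^{-1}\circ l_0\circ f^{-1}$ followed by nothing — more carefully, the diagram $Z_{\cQ}\to Z_0\leftarrow Z_{\cP}$ that results has its left leg homotopic to $l_0$ (since $g$ is compatible with rationalization) and its right leg homotopic to $g_0\circ f_0^{-1}\circ l_0 = h\,l_0$ after composing with the equivalence $Z_0\simeq Y_0$ supplied by $g_0$. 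Since homotopy pullbacks are invariant under equivalence of diagrams in the homotopy category (as recalled from \cite[\S 7.3]{Strom:2011}), the homotopy pullback of~\eqref{eq:homotopy-pullback} is equivalent to that of the $Y$-diagram, namely $Y_{\cP\cup\cQ}$. The one point requiring care here is bookkeeping: one must check that the self-maps and rationalizations can be chosen so that all the relevant squares commute up to homotopy simultaneously, which is routine given that rationalization of a localized space is essentially unique.

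For the second assertion, suppose $h\simeq d_0 e_0^{-1}$ with $d\colon Z_{\cP}\to Z_{\cP}$ and $e\colon Z_{\cQ}\to Z_{\cQ}$ homotopy equivalences. Then in the diagram~\eqref{eq:homotopy-pullback}, I would replace the copy of $Z_{\cP}$ on the right by itself via~$d$ and the copy of $Z_{\cQ}$ on the left by itself via~$e$. The effect on the right leg $h\,l_0$ is to precompose with $d^{-1}$ (up to rationalization, $h\,l_0\circ d^{-1}\simeq d_0 e_0^{-1} l_0 d_0^{-1}\simeq d_0 e_0^{-1} d_0^{-1} l_0$); the effect on the left leg $l_0$ is to precompose with $e^{-1}$, giving $l_0 e_0^{-1}$. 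After also applying the equivalence $e_0$ to the central $Z_0$ to straighten things out, the two legs become honest localization maps $l_0$ up to homotopy, so the resulting diagram is equivalent to $Z_{\cQ}\to Z_0\leftarrow Z_{\cP}$, whose homotopy pullback is $Z_{\cP\cup\cQ}$. Combining with the first assertion yields $Y_{\cP\cup\cQ}\simeq Z_{\cP\cup\cQ}$.

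The main obstacle is not conceptual but organizational: keeping track of which homotopies are needed and ensuring the several homotopy commutative squares (those relating $f$, $g$, $d$, $e$ to the various $l_0$'s, plus the cancellation $d_0 e_0^{-1}d_0^{-1}\simeq \text{(something that straightens out)}$) can be combined coherently. The key enabling facts are that the homotopy pullback depends only on the equivalence class of the diagram in the homotopy category, that rationalization is compatible with all the maps in play, and that any equivalence of localized spaces is detected by homology with the appropriate coefficients — all stated in the text above.
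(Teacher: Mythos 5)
Your strategy coincides with the paper's: realize $Y_{\cP\cup\cQ}$ as the homotopy pullback of $Y_{\cQ}\xrightarrow{l_0} Y_0\xleftarrow{l_0} Y_{\cP}$ and transport that diagram along the vertical equivalences, invoking invariance of homotopy pullbacks under equivalence of diagrams (the paper writes this as a homotopy commutative ladder with vertical maps $g$, $f_0$, $f$). However, the bookkeeping you dismiss as routine is precisely where your write-up goes wrong, in two places.

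First, with your choice of $g_0$ as the middle vertical map, the leg out of $Z_{\cP}$ becomes $g_0\circ f_0^{-1}\circ l_0$, and $g_0 f_0^{-1}=(f_0g_0^{-1})^{-1}=h^{-1}$, \emph{not} $h$; so you obtain the diagram $Z_{\cQ}\xrightarrow{l_0}Z_0\xleftarrow{h^{-1}l_0}Z_{\cP}$ rather than \eqref{eq:homotopy-pullback}. This is repairable: either post-compose with the self-equivalence $h$ of the middle $Z_0$, or (as the paper does) take $f_0$ as the middle vertical map, which places $hl_0$ on the $Z_{\cQ}$ leg and $l_0$ on the $Z_{\cP}$ leg, matching \eqref{eq:homotopy-pullback} on the nose. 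But the identity $g_0\circ f_0^{-1}\circ l_0=hl_0$ as you assert it is false, and note also that in \eqref{eq:homotopy-pullback} the map $hl_0$ emanates from $Z_{\cQ}$, not from $Z_{\cP}$.

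Second, in the proof of the last claim you attach $d$ to the leg carrying $hl_0$ and $e$ to the leg carrying $l_0$; this is backwards, since $hl_0$ is the map out of $Z_{\cQ}$ and must therefore be modified by $e\colon Z_{\cQ}\to Z_{\cQ}$. As written you arrive at the composite $e_0d_0e_0^{-1}d_0^{-1}l_0$, and the commutator $e_0d_0e_0^{-1}d_0^{-1}$ need not be homotopic to the identity for a general simply connected $Z$, so the cancellation you need does not occur. With the correct assignment --- precompose $hl_0$ with $e$ and $l_0$ with $d$ --- both legs become $d_0l_0$, and applying $d_0^{-1}$ to the middle reduces the diagram to $Z_{\cQ}\xrightarrow{l_0}Z_0\xleftarrow{l_0}Z_{\cP}$, whose homotopy pullback is $Z_{\cP\cup\cQ}$. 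The paper packages exactly this as the same ladder with $Y=Z$, $f=d$, $g=e$.
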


\begin{proof}
  The vertical maps in the homotopy commutative ladder
  \begin{equation}\label{eq:ladder}
    \begin{CD}
      Y_{\cQ}@>{l_0}>>Y_0@<{l_0}<<Y_{\cP}\\
      @VgVV@VV{f_0}V@VVfV\\
      Z_{\cQ}@>>{hl_0}>Z_0@<<{l_0}<Z_{\cP}
    \end{CD}
  \end{equation}
  are homotopy equivalences, and the homotopy pullback of the upper 
  row is $Y_{\cP\cup\cQ}$, by analogy with \eqref{eq:locpq}.
  So the ladder induces a homotopy equivalence of homotopy pullbacks
  following \cite[\S7.3]{Strom:2011}, and the first claim follows.

  Substituting $Y=Z$, $f=d$ and $g=e$ into \eqref{eq:ladder} 
  creates an upper row with homotopy pullback $Z_{\cP\cup\cQ}$ . The 
  second claim is then immediate.
\end{proof}

The following proposition gives criteria for ensuring that the genus of 
a finite CW~complex is rigid.
\begin{proposition} \label{thm:genus-rigid}
  Let $Z$ be a simply connected finite CW~complex satisfying
  \begin{itemize}
  \item[(i)] for any space~$Y$ in the Mislin genus of~$Z$, there exists
   a rational homotopy equivalence $k\colon Y\to Z$, and
  \item[(ii)] for any disjoint sets~$\cP$ and $\cQ$ of primes, and any
  rational homotopy equivalence~$h\colon Z_0\to Z_0$, there exist 
  homotopy equivalences $d\colon Z_{\cP}\to Z_{\cP}$ 
  and $e\colon Z_{\cQ}\to Z_{\cQ}$ such that $h\simeq d_0e^{-1}_0$:
  \end{itemize}
then the genus of~$Z$ is rigid.
\end{proposition}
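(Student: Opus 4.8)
The plan is to show that any space $Y$ in the Mislin genus of $Z$ is homotopy equivalent to $Z$, by assembling $Y$ from its localizations using the homotopy pullback machinery set up above. Fix such a $Y$. By hypothesis (i) there is a rational homotopy equivalence $k\colon Y\to Z$. The strategy is to apply Lemma~\ref{thm:homotopy-pullback} with the roles of $Y$ and $Z$ as in the statement, taking $\cP$ to be a single prime (or the empty set) and $\cQ$ its complement among all primes; then $\cP\cup\cQ$ is the set of all primes, so $Y_{\cP\cup\cQ}\simeq Y$ and $Z_{\cP\cup\cQ}\simeq Z$, and a homotopy equivalence between the two pullbacks is exactly what we want.

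First I would produce the two local equivalences required by the lemma. Since $Y$ is in the genus of $Z$, for each prime $p$ there is a homotopy equivalence $f_p\colon Y_p\to Z_p$; bundling these over all primes in $\cQ$ gives $g\colon Y_{\cQ}\to Z_{\cQ}$, and over the single prime in $\cP$ gives $f\colon Y_{\cP}\to Z_{\cP}$. (One must check that a compatible choice over the infinite set $\cQ$ exists; this is routine since $Y_{\cQ}$ is the pullback of the $Y_p$ over $Y_0$, and likewise for $Z$, so a map is assembled from the $f_p$ together with any rationalization of them — here is where the rational equivalence $k$ from (i) is convenient, as it lets us normalize all the $f_p$ to have a common rationalization.) Set $h=f_0 g_0^{-1}\colon Z_0\to Z_0$, a rational self-equivalence of $Z_0$. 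By the first part of Lemma~\ref{thm:homotopy-pullback}, $Y_{\cP\cup\cQ}$ — that is, $Y$ up to homotopy — is the homotopy pullback of $Z_{\cQ}\xrightarrow{hl_0}Z_0\xleftarrow{l_0}Z_{\cP}$.

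Now I invoke hypothesis (ii): the rational self-equivalence $h\colon Z_0\to Z_0$ factors, up to homotopy, as $d_0 e_0^{-1}$ for homotopy self-equivalences $d\colon Z_{\cP}\to Z_{\cP}$ and $e\colon Z_{\cQ}\to Z_{\cQ}$. This is precisely the hypothesis of the second part of Lemma~\ref{thm:homotopy-pullback}, which then yields $Y_{\cP\cup\cQ}\simeq Z_{\cP\cup\cQ}$, i.e. $Y\simeq Z$. Since $Y$ was an arbitrary element of the genus, the genus of $Z$ is rigid.

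The main obstacle is the bookkeeping in the second paragraph: one needs to know that the local equivalences $f_p$ can be chosen coherently across all primes so as to globalize to maps $g$ and $f$ out of the (infinite) pullbacks $Y_{\cQ}$ and $Y_{\cP}$, and that the resulting $h$ is genuinely a rational \emph{equivalence} of $Z_0$ rather than merely a map. The role of condition (i) is exactly to remove the first difficulty — it pins down a rationalization against which all the local equivalences can be compared — so the argument is really just the verification that (i) supplies enough rigidity to run Lemma~\ref{thm:homotopy-pullback} and (ii) supplies the factorization its second half demands. Everything else is a formal consequence of the pullback description of $X_{\cP\cup\cQ}$ recalled before the lemma.
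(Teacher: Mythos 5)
Your overall strategy --- realize $Y$ as a homotopy pullback of localizations of $Z$ glued along a rational self-equivalence $h$, then use (ii) to untwist $h$ via the second half of Lemma~\ref{thm:homotopy-pullback} --- is the right one, and your use of (ii) at the end matches the paper. But there is a genuine gap at the step you dismiss as ``routine'': assembling the individual equivalences $f_p\colon Y_p\to Z_p$, $p\in\cQ$, into a single homotopy equivalence $g\colon Y_{\cQ}\to Z_{\cQ}$ for an \emph{infinite} set $\cQ$. A map into $Z_{\cQ}$ requires the rationalizations $(f_p)_0\colon Y_0\to Z_0$ to agree up to homotopy for all $p\in\cQ$ (coherently), and this is exactly the obstruction that makes the Mislin genus nontrivial in the first place: if such an assembly were always possible you could take $\cQ$ to be the set of all primes and conclude that every simply connected finite complex has rigid genus, with no need for hypotheses (i) or (ii). Nor does (i) let you ``normalize all the $f_p$ to have a common rationalization'': you may only alter $f_p$ by pre- and post-composition with $p$-local self-equivalences, whose rationalizations form, in general, a proper subgroup of the rational self-equivalences of $Z_0$, so you cannot force $(f_p)_0\simeq k_0$ for every $p$.

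The paper circumvents this by using (i) quite differently. The point of $k\colon Y\to Z$ is that it is an \emph{integral} map which is a rational homology isomorphism between spaces with finitely generated homology; hence $H_*(k_{\cQ})$ is an isomorphism for a set $\cQ$ of primes with \emph{finite} complement $\cP=\{p_1,\dots,p_s\}$, so $k_{\cQ}$ is already the desired equivalence over $\cQ$ with no gluing needed. One then adjoins the bad primes one at a time: assuming $Y_{\cQ_i}\simeq Z_{\cQ_i}$, apply Lemma~\ref{thm:homotopy-pullback} with the single prime $p_{i+1}$ (where the genus hypothesis supplies $f\colon Y_{p_{i+1}}\to Z_{p_{i+1}}$) and hypothesis (ii) to obtain $Y_{\cQ_{i+1}}\simeq Z_{\cQ_{i+1}}$. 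After $s$ steps $\cQ_s$ is the set of all primes and $Y\simeq Z$. To repair your argument you should replace the single pullback over an infinite $\cQ$ by this finite induction; the finiteness extracted from (i) is what makes every gluing step a two-piece pullback of the kind Lemma~\ref{thm:homotopy-pullback} actually handles.
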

\begin{proof}
  Let $Y$ belong to the Mislin genus of~$Z$. It follows from
  \cite[p.\,105]{HiltonMislinRoitberg:1975} that there is an 
  isomorphism $H_*(Y;\Z)\cong H_*(Z;\Z)$ of graded abelian 
  groups. Since $H_*(k;\Q)$ is an isomorphism, there exists a
  maximal set $\cQ$ of primes for which $H_*(k_{\cQ})$ is also an
  isomorphism. Since $H_*(Y;\Z)$ and $H_*(Z;\Z)$ are finitely
  generated in each dimension (and vanish in large dimensions), 
  its complement $\cP$ is finite. If $\cP$ is 
  non-empty, write its elements as $p_1$, \dots, $p_s$ and define 
  $\cQ_i=\cQ\cup\{p_1,\dots ,p_i\}$; otherwise, take $\cQ_0=\cQ$. 
  Since $\cQ_s$ contains \emph{all} primes, it suffices to show 
  that $Y_{\cQ_s}\simeq Z_{\cQ_s}$.
   
  In fact we prove that $Y_{\cQ_i}\simeq Z_{\cQ_i}$ for 
  every~$0\le i\le s$, using induction on $i$. The base case is 
  $i=0$; it holds because $H_*(k)$ is an isomorphism when
  $\cP=\emptyset$, so $k$ is a homotopy equivalence.
  Now assume that $g\colon Y_{\cQ_i}\to Z_{\cQ_i}$ is a homotopy 
  equivalence, and write $p=p_{i+1}$. By choice of~$Y$, there is a 
  homotopy equivalence~$f\colon Y_p\to Z_p$, so we may apply
  the first claim of Lemma~\ref{thm:homotopy-pullback}. This 
  identifies $Y_{\cQ_{i+1}}$ as the homotopy pullback of
\[
\begin{CD}
      Z_{\cQ_i}@>{hl_0}>>Z_0@<l_0<<Z_p
\end{CD}\,,
\]
  where $h$ is the homotopy equivalence
  $f_0g_0^{-1}\colon Z_0\to Z_0$. By assumption, there exist 
  homotopy equivalences $d\colon Z_{p}\to Z _{p}$ and 
  $e\colon Z_{\cQ_i}\to Z_{\cQ_i}$ such that $h\simeq d_0e_0^{-1}$.
  The second claim of Lemma~\ref{thm:homotopy-pullback}
  then confirms that $Y_{\cQ_{i+1}}\simeq Z_{\cQ_{i+1}}$, 
  and completes the inductive step.
\end{proof}

\section{Classification up to homotopy equivalence}\label{sec:homotopy}

Finally, we return to the case of weighted projective space. 

In Theorem~\ref{thm:kawasaki-wps} we selected a generator~$\xi_{1}$ 
for $H^2(\PP(\chi))\cong\Z$. Given any set of primes $\cP$, its localization
in~$H^{2}(\PP(\chi)_{\cP};\Z_{\cP})\cong\Z_{\cP}$ must also be a 
generator. We therefore define the \emph{degree}~$\deg(h)$ of any self-map 
$h$ of  $\PP(\chi)_{\cP}$ to be the $\cP$-local integer satisfying 
$H^*(h;\Z_{\cP})(\xi_1)=\deg(h)\,\xi_1$; this determines a multiplicative 
function 
\begin{equation}
  \label{eq:degree}
  {\deg}\colon [\PP(\chi)_{\cP},\PP(\chi)_{\cP}] \to \Z_{\cP}.
\end{equation}
Remark \ref{rem:simfree} shows that any such $h$ is a homotopy 
equivalence if and only if $H^*(h;\Z_{\cP})$ is an isomorphism. 

 \begin{proposition} \label{thm:degree} \hfill 
  \begin{enumerate}
   \item \label{thm:degree-unit}
    A self-map of~$\PP(\chi)_{\cP}$ is a homotopy equivalence
    if and only if its degree is a unit in~$\Z_{\cP}$.
   \item \label{thm:degree-surjective}
    The degree function~\eqref{eq:degree} is surjective. 
   \item \label{thm:degree-CPn}
    If $\cP$ contains no divisor of any~$\chi_j$, then the degree 
    function is a bijection.
  \end{enumerate}
\end{proposition}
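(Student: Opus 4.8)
The plan is to prove the three parts in order, using the cohomology ring structure from Theorem~\ref{thm:kawasaki-wps} together with the fact that integral homology is free (Remark~\ref{rem:simfree}), so that a $\cP$-local self-map is a homotopy equivalence exactly when it induces an isomorphism on $H^*(-;\Z_{\cP})$.

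For part~\eqref{thm:degree-unit}, first note that $H^*(\PP(\chi)_{\cP};\Z_{\cP})\cong H^*(\PP(\chi);\Z)\otimes\Z_{\cP}$ is generated as a ring by~$\xi_1$, since $\xi_1^i$ is a $\Z_{\cP}$-multiple of~$\xi_i$ — indeed the relation $\xi_1^i = (l_1^i/l_i)\,\xi_i$ from Theorem~\ref{thm:kawasaki-wps} shows $\xi_i = (l_i/l_1^i)\,\xi_1^i$, and $l_i/l_1^i$ is a rational number whose reciprocal $l_1^i/l_i$ is an integer dividing the torsion-free module appropriately; more precisely $\xi_1^i$ generates $H^{2i}$ up to the factor $l_1^i/l_i$. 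So the ring map $H^*(h)$ is determined by $\deg(h)$, acting on $H^{2i}$ by multiplication by $\deg(h)^i$ (after passing to the $\xi_i$ basis this is $\deg(h)^i$ times the identity), hence it is an isomorphism iff $\deg(h)^n\in\Z_{\cP}$ is a unit iff $\deg(h)\in\Z_{\cP}^\times$. One must be a little careful that the $\Z_{\cP}$-linear map on $H^{2i}$ really is multiplication by $\deg(h)^i$ relative to the generator $\xi_i$: this follows because $\xi_i$ and $\xi_1^i$ differ by an integer factor and $H^*(h)$ is a ring homomorphism sending $\xi_1$ to $\deg(h)\,\xi_1$.

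For part~\eqref{thm:degree-surjective}, the strategy is to realize every $\cP$-local integer as a degree. The composite $\phi_\chi\colon\CP^n\to\PP(\chi)$ and a suitable section help: Theorem~\ref{thm:kawasaki-wps} gives $\phi^*(\xi_i)=l_i\eta^i$. For any $\cP$-local integer $m$, precompose with a self-map of $\CP^n_{\cP}$ of degree~$m$ (these exist for all $m\in\Z_{\cP}$ by the well-known self-maps of complex projective space, or by obstruction theory since $\CP^n_{\cP}$ has cells only in even dimensions). One then needs a $\cP$-local map $\psi\colon\PP(\chi)_{\cP}\to\CP^n_{\cP}$ with $\psi^*\phi^*$ behaving controllably; alternatively, argue directly by building a self-map of $\PP(\chi)_{\cP}$ skeleton-by-skeleton using the cell structure coming from $\chi^*$ (Remark~\ref{rem:cell-decomposition}) and the iterated Thom space description, choosing the attaching data to multiply $\xi_1$ by $m$ and checking the higher obstructions vanish because the relevant homotopy groups of spheres are rationally trivial and the $\cP$-local cell structure is concentrated in even dimensions. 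The cleanest route is probably: show the degree-$m$ self-map $[z_0:\dots:z_n]\mapsto[z_0^m:\dots:z_n^m]$ of $\CP^n$ descends through $\phi$ to something on $\PP(\chi)$ when $m$ is coprime to the weights, then handle the general $\cP$-local $m$ by a localization argument, assembling from the prime factorization.

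For part~\eqref{thm:degree-CPn}, when $\cP$ contains no prime dividing any $\chi_j$, each $l_i$ is a unit in $\Z_{\cP}$ (since $l_i$ is a product of $r$-values which are powers of primes dividing the weights), so $\xi_i=\eta^i$-type generators and the relation $\xi_i\xi_j=(l_il_j/l_{i+j})\xi_{i+j}$ shows the $\cP$-localized ring is the truncated polynomial ring $\Z_{\cP}[\xi_1]/(\xi_1^{n+1})$, identical to that of $\CP^n_{\cP}$. Thus a self-map is determined by its degree and, conversely, every $m\in\Z_{\cP}$ is realized (by part~\eqref{thm:degree-surjective}, or directly by the $\CP^n$ power map which now descends since the weights are $\cP$-units), and two self-maps with the same degree induce the same map on $\cP$-local cohomology; because $\PP(\chi)_{\cP}$ has free homology concentrated in even degrees, a self-map is determined up to homotopy by its effect on cohomology (obstruction theory: the obstructions to a homotopy lie in $H^*(\PP(\chi)_{\cP};\pi_*(\PP(\chi)_{\cP}))$ in odd total degree or vanish $\cP$-locally), giving injectivity. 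I expect the main obstacle to be part~\eqref{thm:degree-surjective} — specifically, constructing self-maps of arbitrary $\cP$-local degree when $\cP$ does contain primes dividing some weights, where the power map on $\CP^n$ does not obviously descend to $\PP(\chi)$; this likely requires an obstruction-theoretic construction on the $\cP$-localization using the even cell structure, or a Thom-space argument, to push through.
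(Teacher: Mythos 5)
Parts \eqref{thm:degree-unit} and \eqref{thm:degree-CPn} of your outline are essentially workable, but part \eqref{thm:degree-surjective} — which you yourself flag as ``the main obstacle'' — is left as a genuine gap, and the obstruction-theoretic and Thom-space constructions you propose to fill it are neither carried out nor needed. The missing observation is elementary: for \emph{every} positive integer $a$, the coordinatewise power map $m_a\colon\PP(\chi)\to\PP(\chi)$, $[z_0:\dots:z_n]\mapsto[z_0^a:\dots:z_n^a]$, is well defined directly on $\PP(\chi)$ with no coprimality hypothesis whatsoever, because $(g^{\chi_j}z_j)^a=(g^a)^{\chi_j}z_j^a$ and $g\mapsto g^a$ is surjective on $\Cstar$. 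There is nothing to ``descend through $\phi$''; rather, $m_a$ commutes with $\phi$ and with the standard power map $m'_a$ on $\CP^n$, and since $\phi^*(\xi_1)=l_1\eta$ with $H^2$ torsion-free, this forces $\deg(m_a)=a$. Surjectivity then follows formally: any $c\in\Z_{\cP}$ is $\pm b/a$ with $a$ a positive integer that is a unit in $\Z_{\cP}$, so $(m_a)_{\cP}$ is a homotopy equivalence by part \eqref{thm:degree-unit} and $(m_b)_{\cP}(m_a)_{\cP}^{-1}$ has degree $b/a$; complex conjugation in a single coordinate supplies degree $-1$ and a constant map degree $0$. Without this construction your part \eqref{thm:degree-surjective} is a plan, not a proof, and your fallback worry (``the power map on $\CP^n$ does not obviously descend to $\PP(\chi)$'') is simply a misreading of where the power map lives.

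Two smaller points. In part \eqref{thm:degree-CPn} your justification of injectivity is misstated: the obstructions to homotoping $h_1$ to $h_2$ lie in $H^{k}(X;\pi_k(Y))$, and the fact that cohomology is concentrated in even degrees does not place them ``in odd total degree''; what actually saves the argument is that $\pi_k(\CP^n_{\cP})=0$ for $3\le k\le 2n$ (equivalently, the paper's cleaner route: compose with $\CP^n_{\cP}\hookrightarrow\CP^\infty_{\cP}\simeq K(\Z_{\cP},2)$, where maps are classified by $H^2$, and corestrict the resulting homotopy using that the pair is $(2n+1)$-connected). This is repairable but as written is not a correct reason. In part \eqref{thm:degree-unit} your argument agrees with the paper's: $\xi_1^i$ is a nonzero integer multiple of $\xi_i$ in a torsion-free module, so $H^{2i}(h)$ is multiplication by $\deg(h)^i$ and invertibility reduces to $\deg(h)\in\Z_{\cP}^{\times}$.
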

 \begin{proof}
  Since $\deg$ is multiplicative and the degree of the identity map is $1$,
  it maps homotopy equivalences to units. Let 
  $h$ be any self-map of~$\PP(\chi)_{\cP}$, and assume that it has 
  degree~$a$. By Theorem~\ref{thm:kawasaki-wps}, $H^*(h;\Z_{\cP})$ 
  induces multiplication by $a^k$
  on~$H^{2k}(\PP(\chi)_{\cP};\Z_{\cP})\cong\Z_{\cP}$, for 
  every~$1\leq k\leq n$. If $a$ is a unit, then $H^*(h;\Z_{\cP})$ is an 
  isomorphism, so $h$ is a homotopy equivalence. Thus 
  \eqref{thm:degree-unit} holds.
  
  Fix a positive integer~$a$, and define the self-map 
  $m_a\colon\PP(\chi)\to\PP(\chi)$ by raising each homogeneous 
  coordinate to the power~$a$; in particular, write 
  $m'_a\colon\CP^{n}\to\CP^{n}$ for the standard case. Thus 
  $m_a$~and~$m'_a$ commute with the map~$\phi$ of 
  \eqref{eq:definition-phi}, leading to the commutative diagram 
  \begin{equation*}
    \begin{CD}
      H^{*}(\PP(\chi))@>H^*(\phi)>>H^{*}(\CP^{n})\\
      @VH^*(m_a)VV@VVH^*(m'_a)V\\
      H^{*}(\PP(\chi))@>>H^*(\phi)>H^{*}(\CP^{n})
    \end{CD}
  \end{equation*}
  Since $H^2(m'_a)$ is multiplication by~$a$, it follows that 
  $\deg(m_a)=a$. But every element~$c\in \Z_{\cP}$ may be written 
  as a 
  quotient~$c=b/a$ of integers, where $a$ is a positive unit in 
  $\Z_{\cP}$. Then \eqref{thm:degree-surjective} follows 
  from~\eqref{thm:degree-unit}, 
  combined with the observations that complex conjugation on 
  a single coordinate has degree~$-1$, and constant self-maps 
  have degree~$0$.
  
  If $\cP$ contains no divisor of any weight, then
  $\phi_{\cP}\colon\CP^{n}_{\cP}\to\PP(\chi)_{\cP}$ is a homotopy 
  equivalence by Theorem~\ref{thm:kawasaki-wps}. To prove 
  \eqref{thm:degree-CPn}, it therefore suffices to consider maps 
  $h_1$,~$h_2\colon \CP^{n}_{\cP}\to\CP^{n}_{\cP}$ of equal 
  degree; in other words, we may restrict attention to the special case 
  $\CP^n$. Since $\CP^\infty_{\cP}$ is an Eilenberg--Mac\,Lane space~%
  $K(\Z_{\cP},2)$, the compositions of
  $i_{\cP}\colon\CP^n_{\cP}\to\CP^\infty_{\cP}$ with $h_1$ and $h_2$
  are homotopic. Moreover, $\CP^{n}_{\cP}$ is $2n$-dimensional and its
  image is the $(2n+1)$-skeleton of $\CP^\infty_{\cP}$, so the
  homotopy corestricts to a homotopy $h_1\simeq h_2$. Thus
  $\deg$ is injective, and \eqref{thm:degree-CPn} follows.
\end{proof}
The special case $\CP^n$ of part~\eqref{thm:degree-CPn} is well-known
\cite[Thm.~2.2]{McGibbon:1982}, but is stated there without proof.

To complete the proof of Theorem~\ref{thm:genus-wps}, it remains 
only to show that the criteria of Proposition \ref{thm:genus-rigid} 
apply to $\PP(\chi)$.

\begin{proof}[Proof of Theorem~\ref{thm:genus-wps}]
  Let $Y$ be an element of the Mislin genus of~$\PP(\chi)$.
  Since $H_{*}(Y)\cong H_{*}(\PP(\chi))$ as graded abelian groups, 
  $Y$ is homotopy equivalent to a CW complex of dimension~$2n$, 
  by \cite[Prop.~4C.1]{Hatcher:2001}. 
  Furthermore, $H^*(\PP(\chi);\Q)$ is multiplicatively generated by 
  a single element of degree~$2$, so any of the homotopy 
  equivalences $Y_p\simeq\PP(\chi)_p$ induces the corresponding 
  structure on $H^*(Y;\Q)$. A multiplicative generator $\gamma$ may 
  be chosen to be integral in $H^2(Y;\Q)$, because the Universal 
  Coefficient Theorem confirms that $H^2(Y;\Z)\cong\Z$. Also, 
  $\gamma$ is represented by a 
  map~$j\colon Y\to\CP^\infty\simeq K(\Z,2)$, for which $H^2(j;\Z)$ 
  is an isomorphism. Up to homotopy, $j$ factors through 
  $\CP^{n}\subset\CP^{\infty}$, so its corestriction 
  $j'\colon Y\to\CP^n$ is a rational homotopy equivalence.
  Since $\phi\colon\CP^n\to\PP(\chi)$ is a rational homotopy 
  equivalence by Theorem~\ref{thm:kawasaki-wps}, the same holds 
  for the composition $\phi j'\colon Y\to\PP(\chi)$. Criterion (i) 
  of Proposition \ref{thm:genus-rigid} is therefore satisfied by 
  $k=\phi j'$.
  
  Now let $h\colon \PP(\chi)_0 \to \PP(\chi)_0$ be a homotopy 
  equivalence, let $\deg(h)=\pm a/b$ where $a,b\in \N$, and 
  let $\cP$ and $\cQ$ 
  be two disjoint sets of primes.
  Write $a=a'a''$ and $b=b'b''$, where $a'$, $b'$ are divisible 
  only by primes not contained in $\cP$, and $a''$, $b''$ are 
  divisible only by primes contained in $\cP$. 
  Then $a'/b'\in \Z_{\cP}$ and $b''/a''\in \Z_{\cQ}$ are units. 
  So Proposition~\ref{thm:degree}\,\eqref{thm:degree-surjective}
  guarantees the existence of homotopy equivalences 
  $d\colon \PP(\chi)_{\cP}\to \PP(\chi)_{\cP}$ and 
  $e\colon \PP(\chi)_{\cQ}\to \PP(\chi)_{\cQ}$
  of degrees $\pm a'/b'$~and~$b''/a''$ respectively, and
  $h\simeq d_0e_0^{-1}$ by 
  Proposition~\ref{thm:degree}\,\eqref{thm:degree-CPn}.
  Criterion (ii) of Proposition \ref{thm:genus-rigid} is
  therefore satisfied, as required.
\end{proof}  
 
\begin{proof}[Proof of Theorem~\ref{thm:classification-homotopy}]
  If $\chi$~and~$\chi'$ have the same $p$-content up to order, then
  some permutation of homogeneous coordinates defines a
  homeomorphism~$\PP(\pcont{\chi}{p})\cong\PP(\pcont{\chi'}{p})$
  for each prime~$p$. This homeomorphism may be localized at~$p$.

  Now consider the map 
\begin{equation*}
   g\colon\PP(\pcont{\chi}{p})\to\PP(\chi), \quad
   [z_{0}:\dots:z_{n}]\mapsto
   [z_{0}^{\alpha(0)}:\dots:z_{n}^{\alpha(n)}],
\end{equation*}
  where $\alpha(j)=\chi_j/\pcont{\chi}{p}_j$ for~$0\leq j\leq n$.
  Theorem~\ref{thm:kawasaki-wps} implies that $H^*(g;\Z_p)$ is an 
  isomorphism, and Remarks \ref{rem:simfree} confirm that $g_p$ is 
  a homotopy equivalence. So $g_p^{-1}$~and~$g'_p$ determine a 
  chain of maps
  \begin{equation*}
    \PP(\chi)_p\simeq\PP(\pcont{\chi}{p})_p\cong
    \PP(\pcont{\chi'}{p})_p\simeq\PP(\chi')_p
  \end{equation*}
  for any prime~$p$, and the result follows from Theorem~\ref{thm:genus-wps}.
\end{proof}

\subsection*{Acknowledgements}
The authors are particularly grateful to the referee, whose suggestions led
to several improvements in the structure and exposition of this work. A.\,B.\ was
supported  in part by a Rider University Summer Research Fellowship and
Grant \#210386 from the Simons Foundation, and M.\,F.\ by an NSERC Discovery 
Grant.

\end{document}